\documentclass{amsart}
\usepackage[english]{babel}

\usepackage{graphicx} %figure
\usepackage{longtable} 
\usepackage[toc,page]{appendix}
\usepackage[all]{xy}
\usepackage{hyperref}
\usepackage{fancyhdr}
\usepackage{amscd}
\usepackage{amssymb}
\usepackage{pb-diagram}
\usepackage{mathrsfs}

% mappe e funtori

\newcommand{\pic}{\underline{\mbox{Pic}}}

\newcommand{\piclog}{\underline{\mbox{Pic}}^{log}}
\newcommand{\piclogtau}{\underline{\mbox{Pic}}^{log,[0]}}

\newcommand{\piclogdag}{\underline{\mbox{Pic}}^{log, \dag}}

%cateorie

%\newcommand{\mumf}{\mathcal{M}u}
%\newcommand{\mumft}{\widetilde{\mathcal{M}u}}

%spazi proj aff ideali

\newcommand{\ox}{\mathcal{O}}

\newcommand{\frd}{\rightarrow}

%fasci

\newcommand{\gm}{{\mathbb{G}_m}}

%campi

\newcommand{\nn}{\mathbb{N}}

\newcommand{\zz}{\mathbb{Z}}

% strutture algebriche

%variet

% teoremi

\newtheorem{thm}{Theorem}[section]

\newtheorem{lm}[thm]{Lemma}

\newtheorem{cor}[thm]{Corollary}

\theoremstyle{definition}

\newtheorem{definition}[thm]{Definition}
\newtheorem{rmk}[thm]{Remark}

\numberwithin{equation}{section}

\begin{document}
\title{On the Log-Picard functor for aligned degenerations of curves}
\author{Alberto Bellardini}
\address{KU Leuven University, Department Wiskunde\\
Celestijnenlaan 200B\\
3001 Heverlee\\
Belgium}
\email{albertobellardini@yahoo.it}
%\subjclass[2010]{14H10; 14H40; 14K30}
%\keywords{Logarithmic Picard functor; aligned deformations of curves; N\'eron models of jacobians; logarithmic geometry}
\begin{abstract}
We show that a particular subfunctor of the relative logarithmic Picard functor for families of aligned, log semistable curves over a regular base scheme and smooth over an open dense subscheme of the base is representable by a smooth algebraic space which is slightly more separated than the classical relative Picard functor. We show the existence of its maximal separated quotient and give a new functorial interpretation in terms of logarithmic geometry for the N\'eron model of the relative Picard functor of the smooth locus when we restrict to transversal pull backs from spectra of discrete valuation rings. 
We also show that aligned degenerations are log cohomologically flat.
\end{abstract}
 \maketitle

\tableofcontents

\section*{Introduction}

\noindent In this work we study the representability of the \emph{log Picard functor} associated to families of log semistable curves (defined in \ref{logsemistcurves}). For aligned, log semistable degenerations of smooth curves we relate this functor with the N\'eron model of the associated relative Picard functor of the smooth locus. In the literature the notion of N\'eron models it is usually considered in the case where the base scheme is one dimensional. For us a N\'eron model is defined as follows. 
\begin{definition}\label{neronalgebr}
Let $S$ be a scheme, $U$ be a schematically dense open subset of $S$ and $X$ be an algebraic space over $U$ which is smooth over $U$. A N\'eron model for $X$ over $S$ is a smooth and separated algebraic space $\mathcal{N}/S$ together with an
isomorphism $X \cong \mathcal{N}_U$ satisfying the following universal property: let $T \frd S$ be a
smooth morphism of algebraic spaces and $f : T_U \frd X$ be any $U$-morphism then there exists a unique $S$-morphism $F : T \frd \mathcal{N}$ such that $F|_U = f$.
\end{definition}
In the definition we don't require that the N\'eron model is of finite type.
% over a regular base of any dimension and smooth over an open dense subscheme of the base is representable by a separated algebraic space (theorem \ref{teorema}).
By a recent work of D. Holmes (\cite{hol}) it is known that given a family of nodal curves over a regular base which is smooth over a schematically dense open subscheme of the base and with regular total space then the jacobian of the curve on the smooth locus admits a N\'eron model if and only if the labelled graphs of the fibers satisfy a combinatorial condition called \emph{alignment} (see definition \ref{defaligned} and theorem \ref{hol:main}).\\
As far as we know the logarithmic Picard functor this was firstly introduced by Kajiwara in the paper \cite{kaj} for log curves without self intersection over a field. 
A relative version of this functor for semistable families with fibers of any dimension was studied by M.C. Olsson in \cite{olpic} where a representability result is proved under a condition which we call in definition \ref{logcohfl} \emph{log cohomological flatness}.
It is not clear to us under which general assumptions log cohomological flatness is satisfied for families having fibers of dimension at least 2. 
In theorem \ref{teorema} we show that for semistable families of curves the notion of alignment implies log cohomological flatness.
We use this to give a logarithmic interpretation for the N\'eron model of the relative Picard functor of the smooth locus. 
To be more precise given a morphism of log schemes $f:(X,M_X)\frd (S,M_S)$ we define
$$
\piclogtau_{X/S}
$$
to be the \'etale sheafification on the big \'etale site of $S$ of the functor of isomorphism classes of $M_X^{gp}$-torsors such that the contracted product with the sheaf $\overline{M}_X^{gp}$ is isomorphic to the trivial $\overline{M}_X^{gp}$-torsor \'etale locally on $S$.
If $S$ is the spectrum of a discrete valuation ring and $f:X\frd S$ is a morphism of schemes we have a canonical log structure induced from the special fiber. In particular we get in a canonical way a morphism of log schemes
$$
f^{\dag}:(X,M_X^{\dag})\frd (S,M_S^{\dag})
$$
where $M_X^{\dag}$ and $M_S^{\dag}$ are the special fiber log structures. When $S$ is the spectrum of a discrete valuation ring we will always denote with $(-)^{\dag}$ the log structure induced from the special fiber.
We define 
$$\piclogdag_{X/S}$$
as the \'etale sheafification of the functor classifying isomorphism classes of $M_X^{\dag,gp}$-torsors.
Having introduced these notations (see also \ref{logpicardstack} and \ref{maxsepquot} for more details) we can state the main result of this paper.
\begin{thm}[Theorem \ref{teoremamain}, corollary \ref{functnerodvr}]
Let $(S,M_S)$ be a log scheme such that $S$ is regular. Assume that the log structure $M_S$ is trivial on a schematically dense open $U\subset S$.
Let 
$$
f:(C,M_C)\frd (S,M_S)
$$
\noindent be a proper, special, log semistable curve over $S$ (see definition in \ref{specialmorphism}), strict over $U$. 
If the underlying family of schemes $C/S$ is aligned then
\begin{enumerate}
    \item the maximal separated quotient $Q^{log}$ of $\piclogtau_{C/S}$ 
        is an algebraic space and if $C$ is regular this is a N\'eron model for $\pic_{C_U/U}$ in the sense of definition \ref{neronalgebr};
    \item the algebraic space $Q^{log}$ has the property that for any discrete valuation ring $V$ and morphism $T=Spec(V)\frd S$ mapping the generic point to the open $U$ there is a canonical isomorphism of group functors

$$
Q_T^{log}\cong \piclogdag_{C_T/T}
$$
\end{enumerate}

Furthermore given a proper nodal curve over the spectrum of a discrete valuation ring $T$ with smooth generic fiber $C_{\eta}$ then the sheaf $\piclogdag_{C/T}$ is representable by an algebraic group which is smooth and separated over $T$. If $C$ is regular then $\piclogdag_{C/T}$ is the N\'eron model of $\pic_{C_{\eta}/\eta}$.
\end{thm}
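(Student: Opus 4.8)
The plan is to obtain the statement by specializing the first two assertions of the theorem to the base $S=T=Spec(V)$ itself. First I would equip the scheme-theoretic curve $C$ with its special fibre log structure $M_C^{\dag}$ and verify that
$$
f^{\dag}:(C,M_C^{\dag})\frd (T,M_T^{\dag})
$$
is a proper, special, log semistable curve which is strict over the generic point $U=\eta$. Strictness and smoothness over $\eta$ are immediate because both $M_T^{\dag}$ and $M_C^{\dag}$ are trivial away from the special fibre; at a node of the special fibre the local model $xy=\pi^{n}$ with $n\ge 1$ is exactly the standard log semistable chart, so $f^{\dag}$ is log semistable, while $T$ is regular and $M_T^{\dag}$ is trivial on the schematically dense open $\eta$. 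Thus the hypotheses of the theorem are in place, with $C_T=C$.

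The one genuine input needed before invoking the theorem is that the underlying family $C/T$ is aligned. I would establish this as a short lemma exploiting that the base is one-dimensional: every node of the special fibre has thickness equal, up to a unit, to a power $\pi^{n_e}$ of the uniformizer, so the effective divisors on $T$ attached to the edges of the dual graph are all supported at the closed point and are therefore totally ordered under inclusion. Hence the combinatorial alignment condition of definition \ref{defaligned} holds automatically along every circuit, and this argument does not use regularity of $C$.

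With alignment secured, assertion (1) shows that the maximal separated quotient $Q^{log}$ of $\piclogtau_{C/T}$ is an algebraic space, separated by its very construction, and --- when $C$ is regular --- a N\'eron model for $\pic_{C_{\eta}/\eta}$. Applying assertion (2) to the identity morphism $T\frd S=T$, for which $C_T=C$ and $Q_T^{log}=Q^{log}$, yields a canonical isomorphism $Q^{log}\cong \piclogdag_{C/T}$. Combining the two I would conclude that $\piclogdag_{C/T}$ is representable by a separated algebraic space, which is moreover the N\'eron model of $\pic_{C_{\eta}/\eta}$ as soon as $C$ is regular.

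It then remains to promote this to a smooth algebraic group. The group structure is formal: the contracted product of $M_C^{\dag,gp}$-torsors makes the classifying functor, hence its \'etale sheafification $\piclogdag_{C/T}$, into a commutative group functor, and the isomorphism with $Q^{log}$ respects this structure. For smoothness I would invoke the smoothness of $\piclogtau_{C/T}$ furnished by the representability result, which rests on log cohomological flatness (theorem \ref{teorema}): since the relative log dimension is $1$ the deformation theory of $M_C^{\dag,gp}$-torsors is unobstructed, and passage to the maximal separated quotient preserves smoothness, so the conclusion holds even when $C$ is not regular. The main obstacle I anticipate is the very first step --- verifying that the special fibre log structure $M_C^{\dag}$ is precisely the log semistable structure to which the theorem applies, and tracking it faithfully through the base change of assertion (2), together with the automatic-alignment lemma; once these compatibilities are in place the group law, smoothness and separatedness follow formally.
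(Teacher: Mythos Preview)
Your plan has a genuine gap at precisely the point you flag as the ``main obstacle'': the special fibre log structure $f^{\dag}:(C,M_C^{\dag})\to(T,M_T^{\dag})$ is \emph{not} a special morphism in the sense of definition~\ref{specialmorphism}, so the hypotheses of parts (1) and (2) are not met by this log structure. Indeed $\overline{M}_{T,\bar s}^{\dag}\cong\nn$ has a single irreducible element, while $Conn(C_{\bar s})$ is the set of nodes of the special fibre; the map $s_{C_{\bar s}}$ is therefore a bijection only when the special fibre has exactly one node. The paper stresses this explicitly: the special log structure ``\emph{does not} coincide with the log structure induced by the special fiber for families over discrete valuation rings''. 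Worse, when $C$ is not regular (say $xy=\pi^n$ with $n\ge 2$) the divisorial log structure $M_C^{\dag}$ need not even make $f^{\dag}$ log smooth, so your local-model check fails there as well.

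The paper's route is to equip $C/T$ with Olsson's special log structure $M_C^{\sharp}$ (remark~\ref{makespecial}), to which the theorem genuinely applies, and then let part~(2) do the work of comparing $Q^{log}$ with $\piclogdag_{C/T}$; that comparison is the content of diagrams~\ref{grandediagramma} and~\ref{deltaolsson}, not a tautology. Your automatic-alignment argument and the deduction of separatedness and smoothness from $Q^{log}$ are fine once the right log structure is in place. One further omission: to pass from ``smooth separated algebraic space in groups over a one-dimensional base'' to ``algebraic group'' the paper invokes Anantharaman's th\'eor\`eme~4.B, which you should also cite.
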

Over discrete valuation rings, where alignment is automatic, there is a naive way to understand the fact that the maximal separated quotient of the relative Picard functor is at least a subgroup of the relative logarithmic Picard functor computed with the special fiber log structure.
It is known that for the relative Picard functor over discrete valuation rings the obstruction to being separated comes from line bundles arising from divisors concentrated in the special fiber. If one considers the log structure coming from the divisor of the special fiber, one sees that the transition functions describing the \v{C}ech 1-cocycles of these particular line bundles are naturally sections of the sheaf in groups which is the groupification of the sheaf of the log structure. 
In particular they can be interpreted as different trivializations of the trivial torsor under this group.
If we consider isomorphism classes of torsors under this group, these line bundles define the same isomorphism class, namely the one of the trivial torsor.
In other words these line bundles become trivial as logarithmic torsors.
This naive idea can be made more precise and natural by using cohomological methods.\\ 
We don't know how to control the representability using the natural generalization of the notion of the ``special fiber log structure'' to families where the base has dimension greater than one. 
The notion of special log structure introduced by Olsson (which \emph{does not} coincide with the log structure induced by the special fiber for families over discrete valuation rings) produces a better control instead.
Anyway these two log structures have a natural comparison map for families over discrete valuation rings and this can be used to prove part the previous theorem.
Unfortunately we are not able to find non aligned log cohomologically flat examples.

Beside the fact that this functor is closely related to the N\'eron model of the jacobian of these curves, we want to give another motivation about the reason for which we care about it. In \cite{b14} we studied compactifications for jacobians of singular curves having at worst nodal singularities by relating the construction given by Oda and Seshadri in \cite{os} with the Relatively Complete Models of D. Mumford, G. Faltings and C.-L. Chai as presented in \cite{fc}. In particular we were able to recover and uniformize, in a modular way, some coarse moduli spaces of T. Oda and C.S. Seshadri, without using geometric invariant theory, and we have a functor for the uniformizing object.
Here the word ``some'' means that we can do this only for particular choices of the polarization one uses to construct the compactified jacobians of T. Oda and C.S. Seshadri. 
The sheaves we used to uniformize the compactified jacobians naturally correspond to certain logarithmic torsors and we used the formalism of log geometry to give functoriality to our construction.
In particular we showed that the sheaves we obtained have a natural interpretation in terms of the logarithmic Picard functor of a certain analytic covering of the curves. In this paper we generalize the representability result 5.0.5 given in \cite{b14} for curves over discrete valuation rings.

This paper is structured as follows. In the first section we recall facts that we need from logarithmic geometry and the notion of aligned curves.
In section 2.1 we characterize log cohomological flatness in terms of \'etaleness of the natural morphism from the relative Picard functor to the relative log Picard functor (theorem \ref{generalrepres}). 
Section 2.2 contains the main results of the paper. 
Namely in theorem \ref{teorema} it is stated that speciality and alignment imply log cohomological flatness and the representability $\piclogtau_{C/S}$ by a smooth algebraic space.
Finally theorem \ref{teoremamain} relates this functor with the N\'eron model of the relative Picard functor of the generic fiber.
\section*{Acknowledgments}
We would like to thank David Holmes, Johannes Nicaise, Martin Olsson and Filippo Viviani for many useful conversations. This work was supported by the European Research Council (ERC) Starting Grant MOTZETA (project 306610).

\section{Preliminaries in log geometry}
\subsection{Log semistable curves}\label{lageometrialogaritmica}

Standard facts about log geometry can be found in \cite{ka}, we recall here some definitions from the paper \cite{olpic} that we need in this work.
\begin{definition}
    Given a separably closed field $k$, a scheme $X$ over $k$ is called a \emph{semistable variety} if for any geometric point $\bar{x}\in X$ there exists an \'etale neighborhood $(U,u)$ and positive integers $0\leq m\leq n$ such that 
$U$ is \'etale over 
$$
Spec(k[X_1,\dots,X_n]/(X_1\cdot \dots \cdot X_m))
$$
and the point $u$ is sent to the point corresponding to the ideal $(X_1,\dots,X_n)$.
\end{definition}
We want to given a logarithmic version of this notion.
Given a log structure $M_X$ on a scheme $X$ we have always an exact sequence of sheaves in monoids
$$
0\frd \ox_X^{\times}\frd M_X\frd \overline{M}_X\frd 0
$$
The sheaf $\overline{M}_X$ is usually called the \emph{characteristic of the log structure} $M_X$.

\begin{definition}\label{semistablemorphism}
    A log smooth morphism $f:(X,M_X)\frd (S,M_S)$ is called \emph{ essentially semistable} if for each geometric point $\bar{x}\frd X$ the monoids $(f^{-1}\overline{M}_S)_{\bar{x}}$ and $\overline{M}_{X,\bar{x}}$ are free and there exist isomorphisms $(f^{-1}\overline{M}_S)_{\bar{x}}\cong\nn^r$ and $\overline{M}_{X,\bar{x}}\cong\nn^{r+s}$ such that the induced map
$$
(f^{-1}\overline{M}_S)_{\bar{x}}\frd\overline{M}_{X,\bar{x}}
$$
\noindent is ``multidiagonal'', i.e. one can choose coordinates such that in these coordinates the morphism is given by
$$
1_i\frd \left\{
\begin{array}{cc}
1_i & \mbox{ if }i\neq r\\
1_{r+1}+\dots +1_s & \mbox{ if }i= r\\
\end{array}
\right.
$$
where $1_i$ denotes the vector with zero everywhere and $1$ in the i-th entry.
\end{definition}
\begin{rmk}\label{esseflat}
    Essentially semistable morphism are automatically flat and vertical (\cite[2.3]{olun}). Vertical means that the cokernel of the map
$$
f^{*}M_S\frd M_X
$$
\noindent is a sheaf of groups.
\end{rmk}
Let $f:(X,M_X)\frd (S,M_S)$ be an essentially semistable morphism. For any geometric point $\bar{s}\in S$ let $I(\overline{M}_{S,\bar{s}})$ be the set of irreducible elements in $\overline{M}_{S,\bar{s}}$. Define 
$$
Conn(X_{\bar{s}}):=\left\{\mbox{ connected components of the singular locus of }X_{\bar{s}}\right\}.
$$
Since $f$ is essentially semistable then there is a morphism   
$$
s_{X_{\bar{s}}}:Conn(X_{\bar{s}})\frd I(\overline{M}_{S,\bar{s}})
$$
given by sending a component to the unique irreducible element whose image in $\overline{M}_{X,x}$ is not irreducible, where $x$ is a point in the chosen component.

\begin{definition}\label{specialmorphism}
An essentially semistable morphism of log schemes $$f:(X,M_X)\frd (S,M_S)$$ is called \emph{special} at a geometric point $\bar{s}\in S$ if the previous map
$$
s_{X_{\bar{s}}}:Conn(X_{\bar{s}})\frd I(\overline{M}_{S,\bar{s}})
$$
induces a bijection between the set of connected components of the singular locus of $X_{\bar{s}}$ and $I(\overline{M}_{S,\bar{s}})$. A morphism is called special if it is special at every geometric point of $S$.
\end{definition}
General facts about special morphisms are given in \cite{olun}. We want to describe more precisely the fibers of special morphisms.
Assume for simplicity that the scheme $S$ is the spectrum of a field and that 
$$
f:(X,M_X)\frd (S,M_S)
$$
is special. 
There is an isomorphism, induced by $s_X$,
$$
\overline{M}_S\cong\nn^{Conn(X)}
$$
Given $c\in Conn(X)$ one defines the subsheaves of ``the branches at $c$''
$$
\overline{M}_X\supset\overline{M}_c:=\left\{
\begin{array}{c}
    \mbox{ sections }m\mbox{ of } \overline{M}_X \mbox{ s.t. \'etale locally }\exists \;\mbox{ a section }n\mbox{ of } \overline{M}_X \\
    \mbox{ with }m+n \mbox{ is a multiple of }s_X(c)
\end{array}
\right\}
$$
The preimage in $M_X$ of these sheaves gives subsheaves $M_c$ of $M_X$. One recovers the log structure using the previous sheaves by push-out over $\ox_{X}^{\times}$, i.e. there is an isomorphism
$$
M_X\cong \bigoplus_{c\in Conn(X), \ox_X^{\times}} M_c
$$
 There is also another way to see these sheaves. A connected component corresponding to a $c\in Conn(X)$ is set theoretically defined \'etale locally around a point $x$ by the closed given by an ideal of the form
 \begin{equation}\label{jeic}
J_c=(x_1\cdots \hat{x}_j \cdots x_r)_{j=1}^r
\end{equation}
where $\hat{x}_j$ means that we substitute the function $x_j$ with $1$ in the product. One considers the blowup of $X$ along $J_c$ 
$$
\nu_c:\tilde{X}_c\frd X
$$
and shows (\cite[2.15]{olpic}) that there is an isomorphism 
$$
\overline{M}_{c}\cong \nu_{c,*}\nn
$$
Locally if $\bar{x}$ is a closed point of $X$ with preimage $x_1,\dots,x_r$ in $\tilde{X}_c$ then there is an isomorphism 
$$
\overline{M}_{c,\bar{x}}\stackrel{\sim}{\frd} \bigoplus_{x_i}\nn_{x_i}
$$
\begin{definition}\label{logsemistcurves}
We call a log smooth, proper, integral and vertical morphism of log schemes 
$$
f:(X,M_X)\frd (S,M_S)
$$
\noindent whose geometric fibers are connected, one dimensional and semistable a \emph{log semistable curve}.
\end{definition}
\begin{rmk}
    Observe that a log semistable curve is flat and cohomologically flat in dimension zero by remark \ref{autocohfla}.
    %(see definition \ref{cohomofla}).
\end{rmk}
\begin{rmk}\label{makespecial}
Given a log smooth, proper, integral and vertical morphism 
$$
f:(X,M_X)\frd (S,M_S)
$$
\noindent such that the geometric fibers are semistable it is always possible to change the log structure, i.e. find $M_X^{\sharp}$ on $X$ and $M_S^{\sharp}$ on $S$, in an unique way (up to isomorphism) such that the induced morphism 
$$
f^{\sharp}:(X,M_X^{\sharp})\frd (S,M_S^{\sharp})
$$
is special (\cite[2.6]{olun}). More precisely we can find a canonical cartesian diagram
$$
\xymatrix{
    {(X,M_X)}\ar[d]_{f}\ar[r] & {(X,M_X^{\sharp})}\ar[d]^{f^{\sharp}}\\
    {(S,M_S)}\ar[r] & {(S,M_S^{\sharp})}
}
$$
with the right vertical morphism special and the horizontal arrows inducing isomorphisms on the underlying schemes.
Hence it is harmless to assume speciality if we are only interested in the geometry of the underlying scheme. 
\end{rmk}
\subsection{Aligned curves}
In the paper \cite{hol} D. Holmes introduces a class of semistable curves for which the N\'eron model of the jacobian of the smooth locus is representable by an algebraic space.
He calls these curves \emph{aligned}. Let us recall this construction.
%and let us translate it in the log geometry world.\\
Consider $S$ a regular scheme and $C\frd S$ be a semistable curve over $S$. For any geometric point $\bar{s}\in S$, the fiber $C_{\bar{s}}$ is a semistable curve. Let $\Gamma_{\bar{s}}$ be the dual intersection graph of $C_{\bar{s}}$. 
For any edge $e$ of $\Gamma_{\bar{s}}$, corresponding to a node $c\in C_{\bar{s}}$ we can find an element $\alpha\in\mathfrak{m}_{\ox_{S,\bar{s}}^{et}}$ such that we have an isomorphism
\begin{equation}\label{alpha:element}
\widehat{\ox_{C,c}}\cong \widehat{\ox_{S,\bar{s}}}[[x,y]]/(xy-\alpha).
\end{equation}
The element $\alpha$ is not unique but the ideal
$$
\alpha\ox_{S,s}^{et}
$$
\noindent is unique. % (\cite[2.6]{hol}). 
In particular for any edge $e\in \Gamma_{\bar{s}}$ we get a well defined element in the monoid 
$$
L=\ox_{S,\bar{s}}^{et}/\ox_{S,\bar{s}}^{et,\times}
$$
This gives us a so called $L$-edge labelling, namely a map
$$
l:E\frd L
$$
 where $E$ is the set of edges of $\Gamma_{\bar{s}}$. \\
%Assume that we can endow the morphism with log structures such that we obtain a log semistable curve $f:(C,M_C)\frd (S,M_S)$. There are in general many possible choices.
%Around a node the element $\alpha$ in \ref{alpha:element} is in the image of the map
%$$
%M_{S,\bar{s}}\frd \ox_{S,\bar{s}}^{et}
%$$
% We have the following diagram
%$$
%\xymatrix{
%    {0}\ar[r] & {\ox_{S,\bar{s}}^{et,\times}}\ar[r]\ar[d]^{\cong} & {M_{S,\bar{s}}}\ar[d]\ar[r] & {\overline{M}_{S,\bar{s}}}\ar[r] \ar[d]^{g}&{0}\\
%    {0}\ar[r] & {\ox_{S,\bar{s}}^{et,\times}}\ar[r] & {\ox_{S,\bar{s}}^{et}}\ar[r] & {\ox_{S,\bar{s}}^{et}/\ox_{S,\bar{s}}^{et,\times}=L}\ar[r] & {0}\\
%}
%$$
%By uniqueness up to invertible functions we see that for any $l(e)$ there exists a unique element in $g^{-1}(l(e))\in \overline{M}_{S,\bar{s}}$. We get a well defined map
%$$
%l^{log}: E\frd \overline{M}_{S,\bar{s}}
%$$
%with $l^{log}(e)=g^{-1}(l(e))$.\\
\begin{definition}[\cite{hol}]\label{defaligned}
Let $M$ be a commutative monoid and $(\Gamma,l,M)$ be a graph with an $M$ edge labelling. We say that $(\Gamma,l,M)$ is \emph{aligned} if for any circuit $H$ in $\Gamma$ and any pairs of edges $e_1,e_2$ in $H$ there exist positive integers $n_1,n_2$ such that
$$
n_1l(e_1)=n_2l(e_2)
$$
Given $S$ a regular scheme, $C\frd S$ a family of semistable curves and a geometric point $\bar{s}\in S$ we say that $C$ is \emph{aligned} at $\bar{s}$ if the triple $(\Gamma_{\bar{s}},l,\ox_{S,\bar{s}}^{et}/\ox_{S,\bar{s}}^{et,\times})$ defined before is aligned. \\
We say that a family of semistable curves $C\frd S$ is \emph{aligned} if it is aligned at any geometric point $\bar{s}\in S$.\\
Given a log semistable curve $(C,M_C)\frd (S,M_S)$ we call it \emph{aligned} if the underlying morphism of schemes $C\frd S$ is aligned.
%for any geometric point $s\in S$ the triple
%$$
%(\Gamma_{s},l^{log},\overline{M}_{S,s})
%$$
%is aligned.
\end{definition}
%It is immediate from the definitions now the following lemma.
%\begin{lm}
%Let $(C,M_C)\frd (S,M_S)$ be a log semistable curve with $M_C$ supported on the branches of the singularities of $C$ and $M_S$ non trivial precisely on the complementary of the locus where the morphism of underlying schemes is smooth. The curve $$(C,M_C)\frd (S,M_S)$$ is logaligned iff it the underlying scheme map $C\frd S$ of curves is aligned.
%\end{lm}
%

\begin{rmk}\label{alignmentcod2}

When $S$ is the spectrum of a discrete valuation ring and the curve is generically smooth then the alignment condition is automatically satisfied.
\end{rmk}

The notion of alignment is important in the study of the N\'eron model for families of curves over regular base with dimension bigger than one.
We recall now the following important facts.
\begin{thm}[\cite{hol}]\label{hol:main}
    Let $S$ be a regular scheme and $C\frd S$ be a proper family of semistable curves over $S$. Let $U\subset S$ be a schematically dense open subscheme and assume that the curve $C_U$ obtained by base changing to $U$ is smooth. Let $\pic_{C/S}^{[0]}$ be the closure of $\pic_{C_U/U}^0$ in $\pic_{C/S}$ and $clo(e_U)$ be the closure of the identity section $e_U\in \pic_{C_U/U}^{0}$ in $\pic_{C/S}^{[0]}$. 
\begin{enumerate}

    \item The following are equivalent:
        \begin{itemize}
            \item[(i)] the algebraic space $clo(e_U)$ is \'etale over $S$;
            \item[(ii)] the algebraic space $clo(e_U)$ is flat over $S$;
            \item[(iii)] the family of curves $C\frd S$ is aligned.

        \end{itemize}
    \item If the jacobian $\pic_{C_U/U}^0$ admits a N\'eron model over $S$ then $C\frd S$ is aligned.
    \item If $C$ is regular and the family is aligned then the jacobian $\pic_{C_U/U}^0$ admits a N\'eron model over $S$ and the N\'eron model is isomorphic to the quotient $\pic_{C/S}^{[0]}/clo(e_U)$.
\end{enumerate}
\end{thm}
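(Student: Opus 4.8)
The plan is to prove the theorem by a local study of the closure $clo(e_U)$ along the degeneration locus of $C\frd S$, translating flatness (equivalently \'etaleness) of $clo(e_U)$ over $S$ into the alignment condition of Definition \ref{defaligned}, and then deducing the N\'eron property of the separated quotient from the regularity of $C$. First I would recall the structure theory of the relative Picard space: for a proper semistable curve $\pic_{C/S}$ is a smooth algebraic space over $S$, and its fiberwise identity component $\pic^0_{C/S}$ is semiabelian, being the abelian Jacobian over $U$ and an extension of an abelian variety by a torus $\gm^{r}$ over a degenerate geometric point $\bar s$, with $r$ the first Betti number of the dual graph $\Gamma_{\bar s}$. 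The special fiber of $\pic^{[0]}_{C/S}$ and the gluing of its multidegree components are described tropically by the monodromy pairing on $H_1(\Gamma_{\bar s},\zz)$ together with the edge labelling $l\colon E\frd L$ read off from the local equations $xy=\alpha$ of \eqref{alpha:element}. The first concrete step is to turn this into an explicit local model, near $\bar s$, for the schematic closure $clo(e_U)$ of the unit section inside $\pic^{[0]}_{C/S}$.

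Over $U$ the unit section is an isomorphism onto its image, so $clo(e_U)\frd S$ is generically \'etale of degree one; any failure of flatness is concentrated over the degeneration locus and, by a fiber-dimension count, can only occur in codimension at least two, as a jump of the fiber of $clo(e_U)$ (in dimension or in length) coming from incompatible one-parameter limits of the unit along different codimension-one branches --- the height-jump phenomenon. The heart of part (1) is the computation, in the local model above, that this jump is absent exactly when, for every circuit $H$ of $\Gamma_{\bar s}$ and every pair of edges $e_1,e_2\in H$, the labels are comparable, $n_1 l(e_1)=n_2 l(e_2)$ for suitable positive integers $n_i$. Concretely the closure of the unit along a circuit is cut out by monomials in the $\alpha_e$ of its edges, and regularity of $S$ lets one test flatness by comparing the divisors of the $\alpha_e$: flatness holds if and only if these divisors form a single chain for divisibility, which is precisely alignment. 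This gives (ii) $\Leftrightarrow$ (iii); the implication (i) $\Rightarrow$ (ii) is trivial, and for (ii) $\Rightarrow$ (i) the same local model shows that once $clo(e_U)$ is flat it is also unramified, so that flat together with generically \'etale forces \'etale.

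For part (2) I would argue by contraposition: if the family is not aligned, one exhibits a smooth $T\frd S$ and a $U$-morphism $f\colon T_U\frd\pic^0_{C_U/U}$ that does not extend to an $S$-morphism into any smooth separated model, because the required extension of the corresponding line bundle on $(C\times_S T)_U$ acquires a genuine discrepancy (the height jump) along the non-aligned locus; this contradicts the universal property of Definition \ref{neronalgebr}, so the existence of a N\'eron model forces alignment. For part (3), assuming $C$ regular and aligned, part (1) makes $clo(e_U)$ \'etale over $S$, so the quotient $\mathcal{N}:=\pic^{[0]}_{C/S}/clo(e_U)$ is a smooth algebraic space, and it is separated since quotienting by the closure of the unit is exactly passage to the maximal separated quotient. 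For the universal property, uniqueness of an extension follows from separatedness of $\mathcal{N}$ and schematic density of $T_U$ in $T$; for existence one uses that $T$, and hence $C\times_S T$, are regular, so the line bundle on $(C\times_S T)_U$ corresponding to $f$ extends across the codimension-one complement to a line bundle on $C\times_S T$, defining a morphism $T\frd\pic^{[0]}_{C/S}$ whose composite with the projection extends $f$; the remaining choice of extension is precisely a vertical twist, which lies in $clo(e_U)$ and dies in the quotient.

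The step I expect to be the main obstacle is the local computation underlying the second paragraph: producing the correct explicit model for the schematic closure $clo(e_U)$ near a node in terms of the parameters $\alpha_e$, and proving that its flatness over the regular base is controlled, circuit by circuit, exactly by comparability of the labels. Handling circuits of length greater than two --- where alignment is imposed on all pairs and forces the whole circuit onto a single ray --- and disentangling the scheme structure of the closure from the monodromy pairing is the delicate point; the remaining ingredients are either formal (smoothness and separatedness of the quotient) or the standard extension of line bundles across codimension one on a regular total space.
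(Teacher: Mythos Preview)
The paper does not prove this theorem at all: it is stated with the attribution \cite{hol} and no proof environment follows; the result is simply quoted from Holmes's paper and then used (notably via Corollary~\ref{Eetale} and in the proof of Theorem~\ref{teorema}). So there is no ``paper's own proof'' to compare against.

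Your sketch is a reasonable outline of the strategy one finds in \cite{hol}. The identification of the fiber of $clo(e_U)$ at a geometric point $\bar s$ with a combinatorial object built from the labelled graph $(\Gamma_{\bar s},l)$, the reduction of flatness to a circuit-by-circuit comparability of the $l(e)$, the height-jump obstruction for part~(2), and the construction of the N\'eron model as the separated quotient $\pic_{C/S}^{[0]}/clo(e_U)$ using extension of line bundles across codimension one on a regular total space for part~(3) --- all of this matches Holmes's approach in broad strokes. One point to sharpen: your phrasing ``the closure of the unit along a circuit is cut out by monomials in the $\alpha_e$'' and ``flatness holds if and only if these divisors form a single chain for divisibility'' is a bit loose; the actual local model in \cite{hol} describes $clo(e_U)$ via \emph{test curves} (morphisms from traits) landing in the various multidegree components, and the flatness criterion is that the resulting collection of ideals in $\ox_{S,\bar s}^{et}$ is totally ordered under divisibility, which one then shows is equivalent to alignment. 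Your identification of this as the delicate step is correct. For part~(3), note also that one needs $\pic_{C/S}^{[0]}$ itself to be smooth over $S$ (not just $\pic_{C/S}$), which in \cite{hol} is deduced from alignment; you implicitly use this when asserting the quotient is smooth.
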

%\begin{rmk}
%    In \cite{hol} for part $(1)$ it is only stated an equivalence between alignment and flatness of $clo(e_U)$ over $S$. D. Holmes informed me that the same proof also gives an equivalence between alignment and \'etaleness of $clo(e_U)$ over $S$.
%\end{rmk}
%\begin{rmk}\label{autoreg}
%    Observe that in part 1) $S$ is automatically regular, because regularity descends under flat morphisms (\cite{ega} III 17.3.3). 
%\end{rmk}
%\begin{proof}
%We only need to explain point 2) because in \cite{hol} Theorem 6.13 the closure of the identity section is claimed to be flat and not \'etale. However looking at the proof to get \'etaleness one only needs to check that in Lemma 6.14 we can replace ``flat'' by ``\'etale''. Looking at the proof of 6.14 Holmes shows a local isomorphism statement to get flatness. Hence with the same argument we get \'etaleness.
%\end{proof}
\begin{definition}\label{closureidentity}
  Define $E$ to be the schematic closure in $\pic_{C/S}$ of the unity section in $\pic_{C_U/U}$.
\end{definition}
Since in our case $\pic_{C/S}$ is representable by an algebraic space in groups locally of finite type we have that $E$ is also representable by an algebraic space in groups locally of finite type.
From the inclusions $E\subset \pic_{C/S}^{[0]}\subset \pic_{C/S}$ we have that
$$
E=clo(e_U)
$$
\begin{cor}\label{Eetale}
    Under the alignment hypothesis of the previous theorem the algebraic group space $E$ is \'etale over $S$. Furthermore in this case $\pic_{C/S}/E$ is a N\'eron model of $\pic_{C_U/U}$ if $C$ is regular.
\end{cor}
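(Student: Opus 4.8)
The plan is to deduce both assertions from Holmes' theorem~\ref{hol:main} together with the identification $E=clo(e_U)$ already recorded. For the first assertion there is essentially nothing to do: the family is aligned by hypothesis, so the equivalence (i)$\Leftrightarrow$(iii) of theorem~\ref{hol:main}(1) gives that $clo(e_U)$ is \'etale over $S$, and since $E=clo(e_U)$ this is exactly the \'etaleness of $E$.

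For the second assertion I would first produce the quotient and record its easy properties. Because $E$ is an \'etale subgroup space of the smooth, locally finite type group space $\pic_{C/S}$, the quotient $\pic_{C/S}/E$ is representable by an algebraic space and the projection $\pic_{C/S}\frd \pic_{C/S}/E$ is \'etale, so $\pic_{C/S}/E$ is smooth over $S$. Restricting to the schematically dense open $U$, the closure of the unit section becomes the unit section itself, i.e.\ $E_U$ is trivial, whence $(\pic_{C/S}/E)_U\cong \pic_{C_U/U}$; this supplies the required identification over $U$.

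The core of the argument is separatedness, and here I would lean on Holmes' result for the degree zero part. By theorem~\ref{hol:main}(3) the open and closed subgroup $\pic_{C/S}^{[0]}/E$ is a N\'eron model of $\pic_{C_U/U}^{0}$, hence separated over $S$. The quotient $\pic_{C/S}/\pic_{C/S}^{[0]}$ is an \'etale and separated $S$-group space (it records only the degree, i.e.\ $\zz_S$, together with the finite component data of the special fibers). Since an extension of separated group spaces is again separated — the unit section of the total space is the composite of the closed unit section of the subgroup with the closed immersion of the subgroup — we conclude that $\pic_{C/S}/E$ is separated.

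It remains to check the N\'eron mapping property. Given a smooth $T\frd S$ and a $U$-morphism $f:T_U\frd \pic_{C_U/U}$, uniqueness of any extension $F:T\frd \pic_{C/S}/E$ is automatic from the separatedness just proved together with the schematic density of $T_U$ in $T$. For existence I would reduce to the degree zero case: working \'etale locally on $S$ one chooses a section of the smooth locus of $C/S$, obtaining a line bundle $\mathcal{M}$ of the appropriate relative degree whose translation $t_{\mathcal{M}}$ is $E$-equivariant and identifies $\pic_{C/S}^{[0]}/E$ with the degree component meeting the image of $f$, compatibly with the generic fiber; the extension then exists because $\pic_{C/S}^{[0]}/E$ is a N\'eron model, and the \'etale local extensions glue because two choices of $\mathcal{M}$ differ by a degree zero bundle and the corresponding extensions agree over the dense $T_U$, hence agree by separatedness. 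I expect the main obstacle to be precisely this separatedness bookkeeping, namely verifying that quotienting by the closure of the unit genuinely removes all non-separatedness and that $\pic_{C/S}/\pic_{C/S}^{[0]}$ is \'etale and separated; the secondary delicate point is the gluing of the \'etale local translations, which again rests entirely on the uniqueness afforded by separatedness. Regularity of $C$ enters only through theorem~\ref{hol:main}(3), which is what makes $\pic_{C/S}^{[0]}/E$ a genuine N\'eron model in the first place.
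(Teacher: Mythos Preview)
Your argument for the first assertion is exactly what the paper does (and all that is needed): $E=clo(e_U)$ together with theorem~\ref{hol:main}(1).

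For the second assertion the paper gives no argument at all; it simply says that once $E$ is \'etale, $C$ regular and $\pic_{C/S}$ smooth, ``the proof that $\pic_{C/S}/E$ is a N\'eron model is the same argument given in \cite{hol}''. So you are doing strictly more than the paper, and by a different route: rather than rerunning Holmes' direct argument for the full Picard functor, you bootstrap from Holmes' statement for $\pic_{C/S}^{[0]}/E$ and extend across degree components by translation.

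Your outline is essentially correct, but two steps are asserted without justification and are not entirely trivial. First, that $\pic_{C/S}^{[0]}$ is an \emph{open} (not just closed) subgroup: this needs the observation that under alignment and regularity of $C$, $\pic_{C/S}^{[0]}/E$ is a N\'eron model hence smooth, so $\pic_{C/S}^{[0]}$ is smooth over $S$ of the same relative dimension as $\pic_{C/S}$, whence the closed immersion $\pic_{C/S}^{[0]}\hookrightarrow\pic_{C/S}$ is also \'etale and therefore open. Second, that $\pic_{C/S}/\pic_{C/S}^{[0]}$ is separated: your parenthetical ``it records only the degree'' is not a proof. The clean justification is that $\pic_{C/S}^{[0]}$ is a \emph{closed} flat subgroup, so its preimage under the fppf cover $\pic_{C/S}\to\pic_{C/S}/\pic_{C/S}^{[0]}$ being closed forces the unit section of the quotient to be closed by descent. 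Once these two points are filled in, your extension-of-separated-groups argument and the translation trick for the mapping property go through.

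A shorter path to separatedness, which the paper itself uses later (just before definition~\ref{maxsepquot}), avoids the d\'evissage entirely: since $E$ is by construction the schematic closure of the unit section and is flat over $S$, the quotient $\pic_{C/S}/E$ is directly the maximal separated quotient in the sense of Raynaud~\cite{ray}. This makes the passage through $\pic_{C/S}^{[0]}$ unnecessary for separatedness, though your reduction to the degree-zero N\'eron model is still a natural way to get the mapping property.
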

\begin{proof}
    Once we know that $E$ is \'etale over $S$, that $C$ is regular and that $\pic_{C/S}$ is smooth then the proof that $\pic_{C/S}/E$ is a N\'eron model is the same argument given in \cite{hol} and we don't repeat it here.
\end{proof}
%%%%%%%%%%%%%%%%%%%%%
\section{The Log Picard functor}\label{chlogpic}
%%%%%%%%%%%%%%%%%%%
First we need some definitions and facts.
\begin{definition}\label{cohomofla}
    Let $f:X\frd S$ be a proper morphism of finite presentation. We call the morphism $f$ \textbf{cohomologically flat in dimension zero} over $S$ if the formation of $f_{*}\ox_X$ commutes with every base change over $S$.
\end{definition}
\begin{rmk}\label{autocohfla}
In the case of flat families of proper curves this happens if for example $S$ is reduced and the dimension of the vector spaces $H^1(X_s,\ox_{X_s})$ is constant.
For families of semistable curves as in our situation we are in a better situation, namely the cohomological flatness in dimension zero follows because the families are flat and with reduced geometric fibers (\cite[III, 7.8.6]{ega}).
\end{rmk}

Having fixed a log scheme $(S,M_S)$, we consider the category $(Sch/S)$ inside the category of log schemes as follows: let $g:T\frd S$ be a morphism then we put on $T$ the log structure $g^{*}M_S$.
\begin{rmk}\label{univsat}
Observe that given a strict morphism $(T,g^{*}M_S)\frd (S,M_S)$ and a fs log scheme $(X,M_X)$ over $(S,M_S)$ then the fiber product $(X_T,M_{X_T})$ in the category of fine and saturated log schemes coincides with the fiber product in the category of log schemes. 
% In particular if the log scheme $(C_T,M_{C_T})$ is fine and saturated (see \cite[3.4]{fka}). 
\end{rmk}
In order to distinguish from the classical cohomological flatness we recall the following notion which unfortunately is also called cohomological flatness in dimension zero in \cite{olpic}.
\begin{definition}[\cite{olpic} 4.3]\label{logcohfl}
A morphism $(X,M_X)\frd (S,M_S)$ of log schemes is called \textbf{log cohomologically flat} if for any affine nilpotent closed immersion 
$$Spec(A_0)\frd Spec(A)
$$ over $S$ defined by a square zero ideal, the natural map
$$
H^0(X_A,M_{X_A}^{gp})\frd H^0(X_{A_0},M_{X_{A_0}}^{gp})
$$
is surjective.
\end{definition}
\begin{definition}\label{logpicardstack}

\begin{enumerate}

\item Given a morphism of log schemes 
    $$
    f:(X,M_X)\frd (S,M_S)
    $$
    \noindent the \textbf{log Picard stack} is the stack on $Sch/S$ with respect to the \'etale topology corresponding to the groupoid whose fiber over a scheme $g: T\frd S$ is defined by
$$
\mathcal{P}ic_{X/S}^{log}(T):=\{M_{X_T}^{gp} \mbox{-torsors on } (X_{T})_{et}\}
$$
\item the \textbf{log Picard functor}, denoted with $\piclog_{X/S}$, is the \'etale sheafification on $(Sch/S)_{{et}}$ of the functor of isomorphism classes for the log Picard stack. 
%\item Assume that the fibers of $\piclog_{C/S}$ over geometric points are representable by group schemes then we define  
%$$
%(\piclog_{C/S})^{0}(T):= \{u\in \piclog_{C/S}(T) \; |\;  \forall \; t\in T \; u(|t|)\in |(\piclog_{C_t/t})^0(t)|\}
%$$
\item let $\mathcal{P}ic_{X/S}^{log,[0]}$ be the substack whose fiber on a scheme $T$ over $S$ is the groupoid of $M_{X_T}^{gp}$-torsors $P$ such that $P\times^{M_{X_T}^{gp}}\overline{M}_{X_T}^{gp}$ is \'etale locally on $T$ trivial. 
\item let $\piclogtau_{X/S}$ be the \'etale sheafification of isomorphism classes in $\mathcal{P}ic_{X/S}^{log,[0]}$.
\end{enumerate}
\end{definition}

We have at our disposal the following representability result.
\begin{thm}[\cite{olpic} 4.6,\cite{olspriv}]\label{olssonrepres}
    Let $f:(X,M_X)\frd (T,M_T)$ be a proper, log smooth and special morphism with semistable geometric fibers. If the morphism is also log cohomologically flat then $\piclogtau_{X/S}$ is representable by an algebraic space.
\end{thm}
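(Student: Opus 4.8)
The plan is to verify Artin's criterion for algebraic spaces directly for the étale sheaf $\piclogtau_{X/S}$, following the classical proof that $\pic_{X/S}$ is an algebraic space but replacing the unit sheaf $\ox_X^{\times}=\gm$ by $M_X^{gp}$ throughout. The organizing principle is the groupified exact sequence
$$
0\frd \ox_X^{\times}\frd M_X^{gp}\frd \overline{M}_X^{gp}\frd 0,
$$
which exhibits an $M_X^{gp}$-torsor as a line bundle ``twisted'' by combinatorial data valued in $\overline{M}_X^{gp}$. Pushing out along the surjection sends such a torsor to an $\overline{M}_X^{gp}$-torsor, and the substack defining $\piclogtau_{X/S}$ is exactly the one cut out by the condition that this image is étale-locally trivial. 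Taking relative cohomology along $f$ and sheafifying, the associated long exact sequence identifies $\piclogtau_{X/S}$ with the image of the natural morphism $\pic_{X/S}\frd \piclog_{X/S}$; thus on points the log functor is a quotient of the classical relative Picard functor by the ``boundary'' subsheaf coming from $f_{*}\overline{M}_X^{gp}$. This already suggests that representability should follow from Artin's theorem for $\pic_{X/S}$ once the deformation theory is under control.

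To run Artin's criterion I note first that $\piclogtau_{X/S}$ is an étale sheaf by construction and, by a standard limit argument using properness of $f$, locally of finite presentation. The deformation-obstruction theory is the crux. Given a square-zero extension $A\frd A_0$ of $S$-algebras with kernel $I$, the characteristic $\overline{M}_X^{gp}$ is insensitive to the nilpotent ideal (it is locally constant along the strata of the fibers), so the kernel of $M_{X_A}^{gp}\frd M_{X_{A_0}}^{gp}$ agrees with that of $\ox_{X_A}^{\times}\frd \ox_{X_{A_0}}^{\times}$ and one obtains an exact sequence of sheaves
$$
0\frd \ox_{X_{A_0}}\otimes_{A_0} I\frd M_{X_A}^{gp}\frd M_{X_{A_0}}^{gp}\frd 0.
$$
Its long exact cohomology sequence controls everything: the obstruction to deforming a torsor lies in $H^2(X_{A_0},\ox_{X_{A_0}})\otimes I$, which vanishes because $f$ has relative dimension one, so the functor is formally smooth; the tangent space is the cokernel of the connecting map
$$
\delta:H^0(X_{A_0},M_{X_{A_0}}^{gp})\frd H^1(X_{A_0},\ox_{X_{A_0}})\otimes I.
$$
Properness of $f$ makes all these cohomology groups coherent and finite over $A_0$, which supplies the finiteness that Artin's criterion demands, and Grothendieck's existence theorem for the coherent (line-bundle) part, together with the rigidity of the combinatorial datum $\overline{M}_X^{gp}$, will give effectivity of formal deformations.

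The step I expect to be the main obstacle, and the one for which the hypothesis is tailor-made, is openness of versality. The tangent space computed above is $\mathrm{coker}(\delta)$, which a priori can jump as the base point varies; for versality to be open one needs it to be the full $H^1(X_{A_0},\ox_{X_{A_0}})\otimes I$ of locally constant dimension, i.e. one needs $\delta=0$. But $\delta$ vanishes precisely when $H^0(X_A,M_{X_A}^{gp})\frd H^0(X_{A_0},M_{X_{A_0}}^{gp})$ is surjective, and this is exactly the definition of log cohomological flatness. In this sense log cohomological flatness is the logarithmic analogue of the classical requirement that $f_{*}\ox_X$ be invertible and commute with base change, and with it the deformation theory of $M_X^{gp}$-torsors becomes as linear and well-behaved as for the usual Picard functor; representability of the diagonal reduces to identifying $H^0(M_X^{gp})$ with automorphisms and is handled the same way. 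With all criteria in place Artin's theorem yields that $\piclogtau_{X/S}$ is an algebraic space. Equivalently, in the quotient picture of the first paragraph, log cohomological flatness guarantees that the boundary subsheaf is a flat, locally finitely presented subgroup of $\pic_{X/S}$, and the quotient of an algebraic space by such a subgroup is again an algebraic space.
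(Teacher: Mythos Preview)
The paper does not actually prove this theorem; it cites Olsson's argument in \cite{olpic} and records (via \cite{olspriv}) that Olsson's original proof for the full $\piclog$ contains an error in the algebraization step (lemma~4.18 of \cite{olpic}), while the proof goes through for the subfunctor $\piclogtau$ precisely because that lemma is not needed there. So your sketch is being measured against Olsson's strategy rather than an argument written out in this paper, and your overall plan---verify Artin's criterion using the log exponential sequence for deformation theory---is indeed Olsson's approach.

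Two substantive issues. First, the statement is for semistable fibres of \emph{arbitrary} dimension (it is quoted from a paper on K3 degenerations), so your claim that the obstruction group $H^2(X_{A_0},\ox_{X_{A_0}})\otimes I$ vanishes ``because $f$ has relative dimension one'' is unjustified. This is not fatal, since Artin's criterion only asks for a coherent obstruction theory, not a trivial one, and formal smoothness is not part of the conclusion; but you should not invoke it.

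Second, and this is the real gap, your treatment of effectivity is exactly where Olsson's original argument went wrong, and your phrasing would apply verbatim to the full $\piclog$, which is the statement the paper is warning you \emph{not} to claim. A formal $M_X^{gp}$-torsor over $\widehat{A}$ splits into a formal line bundle and a compatible system of $\overline{M}_X^{gp}$-torsors; Grothendieck's existence theorem algebraizes the first, but an arbitrary $\overline{M}_X^{gp}$-torsor has no reason to algebraize---``rigidity of the combinatorial datum'' refers to the sheaf $\overline{M}_X^{gp}$, not to torsors under it. The reason the argument works for $\piclogtau$ and not for $\piclog$ is that by definition the $\overline{M}_X^{gp}$-torsor is trivial \'etale-locally on the base, so there is nothing to algebraize on that side. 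Your sketch does not isolate this point, and without it the effectivity step is the same flawed step the paper explicitly flags.
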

\begin{proof}
There is a small mistake in \cite{olpic} concerning the algebraization statement. Namely the last part of lemma 4.18 is not true in general. 
M. Olsson informed me that the proof of \cite[4.6]{olpic} works for $\piclogtau_{X/S}$ (we don't need lemma 4.18 in this case).
\end{proof}
%Having fixed $(C,M_C)\frd (S,M_S)$ we use the notation
%$$
%\piclog=\pic_{C/S}^{log}
%$$
%\noindent\\
\subsection{Preliminary observations}
Let us fix a base scheme log $(S,M_S)$ and a morphism of log schemes $f:(C,M_C)\frd (S,M_S)$. Consider the exact sequence of sheaves on $C$ for the \'etale topology
$$
0\frd\ox_C^{\times}\frd M_C^{gp}\frd \overline{M}_C^{gp}\frd 0
$$
This gives us a long exact sequence
\begin{equation}\label{longexactseq}
   \dots \frd f_{*}\overline{M}_{C}^{gp}\stackrel{\delta}{\frd} R^1f_{*}\ox_C^{\times}\frd R^1f_{*}M_{C}^{gp}\frd R^{1}f_{*}\overline{M}_{C}^{gp}\frd\dots \end{equation}
\noindent of sheaves in groups. This sequence is functorial with respect to base change $T\frd S$ and in this way we obtain a sequence on the big \'etale site of $S$.
Our approach is to study the representability of the log Picard functor by analyzing the representability of the elements in this complex.
We start now with a general theorem in the special case which characterizes the log cohomological flatness.
\begin{thm}\label{generalrepres}
    Let $f:(C,M_C)\rightarrow (S,M_S)$ be a special, log semistable curve with $S$ excellent. Let $K$ be the kernel of the morphism $\pic_{C/S}\rightarrow \piclog_{C/S}$ induced by the sequence \ref{longexactseq}. The following are equivalent:
            \begin{enumerate}
                \item $K$ is representable by an algebraic space \'etale over $S$;
                \item the morphism $f:(C,M_C)\frd (S,M_S)$ is log cohomologically flat.
            \end{enumerate}
If one of the previous condition is satisfied then $\piclogtau_{C/S}$ is representable by an algebraic space smooth over $S$.
\end{thm}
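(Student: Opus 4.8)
The plan is to reduce the whole statement to an infinitesimal computation on $K$ together with one structural identification of $\piclogtau_{C/S}$. First I would record that a log semistable curve is flat, proper and cohomologically flat in dimension zero (remark \ref{autocohfla}), so that the ordinary relative Picard functor $\pic_{C/S}=R^1f_{*}\ox_C^{\times}$ is representable by a smooth group algebraic space over $S$, locally of finite presentation (here the excellence of $S$ enters through Artin's criteria), with relative Lie algebra $R^1f_{*}\ox_C$; smoothness is automatic because the fibres are curves and the obstruction in $H^2$ vanishes. From the short exact sequence $0\frd\ox_C^{\times}\frd M_C^{gp}\frd\overline{M}_C^{gp}\frd 0$ and the long exact sequence \ref{longexactseq} I would then identify, after \'etale sheafification, $K=\ker(\pic_{C/S}\frd\piclog_{C/S})$ with the image of the connecting map $\delta\colon f_{*}\overline{M}_C^{gp}\frd\pic_{C/S}$, and simultaneously $\piclogtau_{C/S}$ with $\mathrm{im}(\pic_{C/S}\frd\piclog_{C/S})=\pic_{C/S}/K$; the latter uses that an $M_C^{gp}$-torsor whose associated $\overline{M}_C^{gp}$-torsor is \'etale locally trivial reduces its structure group to $\ox_C^{\times}$, i.e. lies in the image of $\pic_{C/S}$.

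The heart of the argument is a deformation-theoretic analysis of $K$ along affine square-zero extensions $A\twoheadrightarrow A_0$ over $S$, with kernel $I$. Two observations drive it. Since the characteristic $\overline{M}_C^{gp}$ is a strict pullback insensitive to nilpotents, $H^0(C_A,\overline{M}_{C_A}^{gp})=H^0(C_{A_0},\overline{M}_{C_{A_0}}^{gp})=:\Lambda$; working over local Artinian test rings, where \'etale sheafification adds no sections, we have $K(A)=\delta_A(\Lambda)$ and $K(A_0)=\delta_0(\Lambda)$ indexed by the \emph{same} group, so functoriality of $\delta$ makes the reduction $K(A)\frd K(A_0)$ surjective and hence $K$ is \emph{always} formally smooth. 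On the other hand, writing $\delta_0$ as $\delta_A$ followed by the reduction $H^1(\ox_{C_A}^{\times})\frd H^1(\ox_{C_{A_0}}^{\times})$, whose kernel is the infinitesimal part $H^1(C,I\cdot\ox_C)$, a diagram chase identifies the lifting condition of definition \ref{logcohfl}, namely surjectivity of $H^0(M_{C_A}^{gp})\frd H^0(M_{C_{A_0}}^{gp})$, with the equality $\ker\delta_A=\ker\delta_0$, i.e. with the requirement that $\delta_A(\lambda)$ is never a nonzero infinitesimal element. This is exactly the vanishing of the relative tangent of $K$. Hence $f$ is log cohomologically flat $\iff$ $K$ is formally unramified $\iff$ (given formal smoothness) $K$ is formally \'etale.

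With this the equivalence follows at once. For $(1)\Rightarrow(2)$, an algebraic space \'etale over $S$ is in particular formally unramified, so the chase returns log cohomological flatness. For $(2)\Rightarrow(1)$, log cohomological flatness lets me invoke Olsson's representability theorem \ref{olssonrepres} to realise $\piclogtau_{C/S}$ as an algebraic space; then $K=\ker(\pic_{C/S}\frd\piclogtau_{C/S})$ is a representable group algebraic space which, being formally \'etale and locally of finite presentation, is \'etale over $S$. The concluding assertion is then formal: $\piclogtau_{C/S}=\pic_{C/S}/K$ exhibits the log Picard functor as the quotient of the smooth algebraic space $\pic_{C/S}$ by the free translation action of the \'etale group space $K$, an \'etale equivalence relation whose quotient is again an algebraic space, smooth over $S$.

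The main obstacle I expect is making the two bookkeeping identifications rigorous at the level of \'etale sheaves rather than of naive cohomology: that $K$ is genuinely the sheaf image of $\delta$, and that $\piclogtau_{C/S}$ is genuinely $\pic_{C/S}/K$, so that the sectionwise statements about $\delta_A$ and $\Lambda$ really compute the functor $K$ and its infinitesimal behaviour. Controlling this requires using the speciality and verticality of $f$ to describe $f_{*}\overline{M}_C^{gp}$ explicitly through the sheaves $\overline{M}_c\cong\nu_{c,*}\nn$ attached to the connected components of the singular locus, and to ensure that the relevant $H^0$ and $H^1$ commute with the base changes entering the chase. The comparison of $\delta_A$ with $\delta_0$, together with the claim that formal smoothness of $K$ holds unconditionally because $\Lambda$ is nilpotent-invariant, is where I would spend the most care.
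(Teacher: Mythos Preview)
Your argument is correct, and its logical skeleton is the same as the paper's: both directions hinge on an infinitesimal comparison along square-zero thickenings, and $(2)\Rightarrow(1)$ goes through Olsson's representability theorem. The organisation, however, is genuinely different.

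The paper invests in an auxiliary lemma (proved via Artin's criteria) that $f_{*}\overline{M}_C^{gp}$ and $R^1f_{*}\overline{M}_C^{gp}$ are themselves algebraic spaces \'etale over $S$. From this it deduces $\piclogtau_{C/S}=\pic_{C/S}/K$ by identifying $\piclogtau$ with the fibre of $\piclog$ over the identity component of $R^1f_{*}\overline{M}_C^{gp}$, which is $0_S$ by \'etaleness. For $(1)\Rightarrow(2)$ it then runs a Lie-algebra comparison: since $K$ and the inclusion $\piclogtau\hookrightarrow\piclog$ are formally \'etale, the maps $\mathcal{L}ie(\pic)\to\mathcal{L}ie(\piclog)$ are isomorphisms, and unwinding this in the tangent diagram forces the obstruction map $H^0(M_{C_{A_0}}^{gp})\to H^1(\ox_{C_{A_0}}\otimes I)$ to vanish. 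For $(2)\Rightarrow(1)$ it does \emph{not} show $K$ formally \'etale directly; instead it passes to $K'=\ker(\delta)$, observes that log cohomological flatness makes $f_{*}M_C^{gp}$ formally smooth, and descends \'etaleness from the surjection $f_{*}M_C^{gp}\twoheadrightarrow K'$ and then from $f_{*}\overline{M}_C^{gp}\twoheadrightarrow K$.

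You bypass the auxiliary lemma entirely. Your identification $\piclogtau=\pic/K$ via reduction of structure group is more direct than the paper's fibre-product argument and does not require knowing anything about $R^1f_{*}\overline{M}_C^{gp}$. Your core observation---that $\Lambda=H^0(\overline{M}_{C_A}^{gp})$ is nilpotent-invariant, so $K(A)=\delta_A(\Lambda)$ and $K(A_0)=\delta_0(\Lambda)$ share a source, making formal smoothness of $K$ automatic and reducing formal unramifiedness to $\ker\delta_A=\ker\delta_0$---packages both directions into a single statement and is cleaner than the paper's separate $\mathcal{L}ie$ computation and $K'$ detour. The price is exactly what you flag at the end: the equalities $K(A)=\delta_A(\Lambda)$ hold literally only over strictly henselian test rings, so you must either restrict to those (legitimate once $K$ is known to be locally of finite presentation) or, for $(1)\Rightarrow(2)$ where log cohomological flatness is demanded over \emph{arbitrary} affine thickenings, supplement the chase with the surjectivity of $H^0(\ox_{C_A}^{\times})\to H^0(\ox_{C_{A_0}}^{\times})$ coming from ordinary cohomological flatness. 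Your approach is more economical for this theorem in isolation; the paper's lemma on $f_{*}\overline{M}_C^{gp}$ is reused later in the proof of Theorem~\ref{teorema}.
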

\begin{proof}
    The last representability statement follows from theorem \ref{olssonrepres}. Smoothness is discussed later.
Let us first prove the following lemma which also works for families with fibers of dimension bigger than one.
\begin{lm}\label{mbaralgsp}
  Let $(X,M_X)\frd (S,M_S)$ be a proper, log smooth, special morphism with semistable geometric fibers and assume that $S$ is excellent. 
The \'etale sheaves 
$
R^{1}f_{*}\overline{M}_{X}^{gp}
$
 and $f_{*}\overline{M}_{X}^{gp}$ considered as sheaves on $(Sch/S)_{et}$ are representable by algebraic spaces in groups which are locally separated and \'etale over $S$.
 \end{lm}

\begin{proof}
   First we are going to verify the conditions of Artin's representability theorem in the form of theorem 5.3 in \cite{aalg} for the sheaves $f_{*}\overline{M}_{X}^{gp}$ and $R^{1}f_{*}\overline{M}_{X}^{gp}$.
Observe that in loc. cit. Artin restricts to the case in which the scheme $S$ is of finite type over a field or over an excellent Dedekind domain in order to obtain the algebraization property.
It has been shown in \cite{cdej} that this condition can be omitted. We can apply Artin theorem also in the case that $S$ is assumed to be only excellent.

Condition $[0^{\prime}]$ follows because there is no difference between \'etale and flat log structure (\cite[A.1]{ollog}) hence we have that $f_{*}\overline{M}_{X}^{gp}$ and $R^1f_{*}\overline{M}_{X}^{gp}$ are sheaves in the fppf topology.

Let us see that the functors are limit preserving.
Take an inductive family $\{A_i\}$ of noetherian rings over $S$.
Take a system $T_i:=Spec(A_i)$, $T=\varprojlim T_i$, $f_i:X_i\frd T_i$, $u_i:T\frd T_i$ the projection morphism, $X=\varprojlim X_i$ and analogously for the log structures.
Using \cite[4, Exp. VII, 5.11]{sga} we see that the morphisms
$$
\varinjlim u_i^{*}R^nf_{i,*}\overline{M}_{X_i}^{gp}(S_i)\frd R^nf_{\infty,*}\overline{M}_{X_{\infty}}^{gp}(S)
$$
are bijective for all $n\geq 0$ in particular condition $[1^{\prime}]$ is also satisfied.

The sheaf $\overline{M}_{X}^{gp}$ is a constructible sheaf of $\zz$-modules by \cite[3.5 ii]{ollog}.  Since the morphism $f$ is proper and special  if we take the base change to an affine scheme $T$, which is the spectrum of a complete local ring with separably closed residue field, we have a decomposition
$$
H^0(T,\overline{M}_{T})\cong \bigoplus_{c_i\in Conn(X_0)}\nn n_{c_i}
$$
and 
$$
\overline{M}_{X_{T}}^{gp}\cong \bigoplus_{c\in Conn(X_0)}\overline{M}_{c}^{gp} 
$$
where $Conn(X_0)$ denotes the set of connected components of the singular locus of the special fiber $X_0$, and $\overline{M}_{c}$ are ``the branches at $c$'' defined as follows
$$
\overline{M}_{c}:=\left\{
\begin{aligned}
x\in \overline{M}_{X_0} \mbox{ such that \'etale locally}\\
\mbox{exists }y\in \overline{M}_{X_0} \mbox{ with } x+y\in (n_c)
\end{aligned}
\right\}
$$
Write $T=Spec(A)$ and let $t_c\in A$ the function corresponding to the component $c\in Conn(X_A)$. The sheaf $\overline{M}_{c}^{gp}$ is supported on $t_c=0$ so we can assume that in $A$ all $t_c$ are zero.
Let $Z_c$ be the corresponding connected component of the locus where $f$ is not smooth.
If $z\in Z_c$ then, \'etale locally, around $z$ the scheme $X_A$ is isomorphic to 
$$
Spec\big(A[x_1,\dots,x_n]/(x_1\cdots x_r)\big)
$$
for some $r\leq n$ and $z$ corresponds to the origin. Let $J_c$ be the ideal corresponding to $Z_c$, i.e. locally given by 
$$(x_1\cdots \hat{x_i}\cdots x_r)_{i=1}^r$$
as in \ref{jeic} and we define 
$$
\nu_c:\tilde{X}_{A,c}\frd X_A
$$
as the proper transform of the blow-up of $X_A$ at $J_c$.
On $X_A$ the sheaf $\overline{M}_c^{gp}$ is isomorphic to $\nu_{c,*}\zz$.\\
The same argument repeats whenever we base change with an affine artinian thickening over $S$ 
$$
Spec(A)\frd S
$$
In particular, given a surjective morphism $A\frd A_0$ of artinian $S$-algebras, with square zero kernel, we have that the map
\begin{equation}\label{fdefzero}
H^0(X_{A},\overline{M}_{X_A}^{gp})\frd H^0(X_{A_0},\overline{M}_{X_{A_0}}^{gp})
\end{equation}
is an isomorphism. Indeed by the previous decomposition over complete local rings we have that the group
$$
H^0(X_{A_0},\overline{M}_{X_{A_0}}^{gp})
$$
is isomorphic to direct sum of copies of $\zz$ indexed by $Conn(X_0)$.
Since for proper morphisms the set of connected components does not changes under nilpotent thickenings (\cite[4, XII, 5.7]{sga}) we obtain the claim. This also shows that the sheaf associated to $f_{*}\overline{M}_X^{gp}$ is formally \'etale over  $S$.

Furthermore the previous argument implies condition $[2^{\prime}]$ for
$$
f_{*}\overline{M}_X^{gp}
$$

Let $A$ be a complete, local noetherian ring with maximal ideal $\mathfrak{m}$ over $S$. Let $A_n:=A/\mathfrak{m}^n$. 
We have proper transforms
$$
\nu_{c,n}:\tilde{X}_{A_n,c}\frd X_{A_n}
$$
First observe that since $\nu_{c,n}$ is finite then $H^1(X_{A_n},\nu_{c,n,*}\zz)=H^1( \tilde{X}_{A_n,c},\zz)$.
Furthermore we have 
\begin{equation}\label{equah1}
H^1(\tilde{X}_{A_n,c},\zz)=H^1(\tilde{X}_{A_0,c},\mathbb{Z})
\end{equation}
Indeed since the morphism $i_n:\tilde{X}_{A_0,c}\rightarrow \tilde{X}_{A_n,c}$ is purely inseparable we have that 
$$
H^1(\tilde{X}_{A_0,c},i_n^{*}\zz)=H^1(\tilde{X}_{A_n,c},\zz)
$$
and since $i_n^{*}\zz=\zz$ we have \ref{equah1}.
In particular using the decomposition $\overline{M}_{X_{A_n}}=\bigoplus \nu_{c,n,*}\zz$ we obtain
$$
H^1(X_{A_n},\overline{M}_{X_{A_n}}^{gp})= H^1(X_{A_0},\overline{M}_{X_{A_0}}^{gp})
$$
For $A$ complete local ring we need to argue a bit differently because the morphism $i_n:\tilde{X}_{A_0,c}\frd \tilde{X}_{A,c}$ is not purely inseparable. Set $Y=\tilde{X}_{A,c}$ (resp. $Y_n=\tilde{X}_{A_n,c}$) and let $\mu:\tilde{Y}\frd Y$ (resp. $\mu_n:\tilde{Y}_n\frd Y_n$) be the strict transform of the blow up at all connected components of the singular locus. 
By \cite[4, IX, 3.6]{sga} $H^1(\tilde{Y},\zz)$ (resp. $H^1(\tilde{Y}_n,\zz)$) are trivial. Consider the cokernel $Q$ (resp. $Q_n$) defined by the sequence
$$
0\frd \zz\frd \mu_{*}\zz\frd Q\frd 0
$$
(resp.
$$
0\frd \zz\frd \mu_{n,*}\zz\frd Q_n\frd 0
$$
)
We have then a commutative diagram
$$
\xymatrix{
    {0}\ar[r] & {H^0(Y,\zz)}\ar[r]\ar[d] & {H^0(\tilde{Y},\zz)}\ar[r]\ar[d]&{H^0(Y,Q)}\ar[r]\ar[d]&{H^1(Y,\zz)}\ar[r]\ar[d] & {0}\\
{0}\ar[r] & {H^0(Y_n,\zz)}\ar[r] &{H^0(\tilde{Y}_n,\zz)}\ar[r]&{H^0(Y_n,Q_n)}\ar[r]&{H^1(Y_n,\zz)}\ar[r] & {0}\\
}
$$
The vertical arrows on the $H^0$ are isomorphisms because we get direct sums of the group $\zz$ indexed by sets which depend only on connected components and on connected components of the singular locus of $Y_0$. This implies that $H^1(Y,\zz)\cong H^1(Y_n,\zz)$.
Summarizing we obtain
\begin{equation}\label{completionr1}
\varprojlim H^1(X_{A_n},\overline{M}_{X_{A_n}}^{gp})=H^1(X_{A},\overline{M}_{X_{A}}^{gp})
\end{equation}
and condition $[2^{\prime}]$ is satisfied by
$$
R^1f_{*}\overline{M}_X^{gp}
$$
The sheaf $f_{*}\overline{M}_X^{gp}$ also satisfies condition $[2^{\prime}]$. The proof is similar and we omit it.
The same argument works if we replace the morphisms $A_{n+1}\frd A_n$ by artinian thickenings $B\rightarrow A$. 
This tells us that the sheaf associated to $R^1f_{*}\overline{M}_X^{gp}$ is formally \'etale over $S$.

We are now going to check $[3^{\prime}]$. Let $A_0$ be local henselian over $S$. Assume we have two sections $\xi_1$ and $\xi_2$ of $f_{*}\overline{M}_X^{gp}(Spec(A_0))$ coinciding on a point $x\in Spec(A_0)$.
Since we are in a group functor we can consider the difference
$$
s=\xi_1-\xi_2\in H^0(X_{A_0},\overline{M}_{X_{A_0}}^{gp})
$$
Now the set of points in $X_{A_0}$ where $s$ is the trivial element in the sheaf $\overline{M}_{X_{A_0},x}^{gp}$ is a non empty open $W\subset X_{A_0}$ because by \cite[3.5]{ollog} it is constructible and stable under generalization. 
Let $Z=X_{A_0}\setminus W$.
By properness of $f$ we have that $f(Z)$ is closed. Its complementary $V$ is then open and non empty because it contains $x$. Over the open $V$ the element $s$ is zero and the two sections coincide.
Since we are looking for open subschemes we can always base change to the henselianization at points and apply the previous argument there to check that the locus where two sections coincide is open.\\
Consider now $[3]^{\prime}(a)$. For the sheaf $f_{*}\overline{M}_X^{gp}$ the previous argument applies.
Let us consider $R^1f_{*}\overline{M}_X^{gp}$. Since the statement is true at the closed point $a\in Spec(A_0)$ and $A_0$ is a discrete valuation ring it is enough to find an open for which the statement is true. 
It is enough to check this over the strict henselianization $A^h$ of $A_0$ at the closed point $a$ of $Spec(A_0)$. 
Using the proper base change theorem in the form of \cite[4, Exp. XII, 5.5]{sga} we have an inclusion
$$
R^1f_{*}\overline{M}_X^{gp}(A^h)=H^1(X_{A^h}, \overline{M}_{X_{A^h}}^{gp})\hookrightarrow H^1(X_a, \overline{M}_{X_a}^{gp})
$$
\noindent hence the two torsors coincide globally.

Let us check $[3]^{\prime}(b)$. 
For the sheaf $f_{*}\overline{M}_X^{gp}$ there are no problems by the same argument as before.
Let us consider $R^1f_{*}\overline{M}_X^{gp}$. Again it is enough to check the statement after base change to the henselianization. If $x$ denotes one of the points in the dense set and $A_x$ denotes the strict henselianization of $A_0$ at $x$ it is enough to find an open $x\in V\subset Spec(A_x)$ where the statement is true.
We apply as before the proper base change theorem and we get $V=Spec(A_x)$.

From conditions \ref{fdefzero} and formal \'etaleness of $R^1f_{*}\overline{M}_X^{gp}$ we get that the module of the deformation theory is the zero module for both functors. Furthermore obstruction theory is also trivial. In particular conditions $[4^{\prime}]$ and $[5^{\prime}]$ are also satisfied by both functors.
The absence of obstructions to lift over infinitesimal thickenings and trivial deformation theory tells us that the corresponding algebraic spaces are \'etale over $S$.

%Since \ref{r1fdef} is true over fields then the morphism 
%\begin{equation}\label{r1map}
%R^1f_{*}\overline{M}_X^{gp}\rightarrow S
%\end{equation}
%\noindent is universally injective. Being also \'etale it is also an open immersion. %TAG05W5
%This implies that $R^1f_{*}\overline{M}_X^{gp}$ is a scheme. Since the map \ref{r1map} is surjective then it is an isomorphism and so $R^1f_{*}\overline{M}_X^{gp}$ is the trivial group.
%%%%%
\end{proof}

Let us see $(1)\Rightarrow (2)$. Since we assume that $K$ is \'etale over $S$ then the quotient $\pic_{C/S}/K$ is representable by an algebraic space over $S$.
Using the previous lemma we have a sequence 
\begin{equation}\label{mezzacorta}
0\frd \pic_{C/S}/K\frd \piclog_{C/S}\frd R^1f_{*}\overline{M}_{C}^{gp}
\end{equation}
with left and right member being algebraic spaces in groups.
Since $R^1f_{*}\overline{M}_{C}^{gp}$ is \'etale over $S$ it is not difficult to see that the connected component of the identity $R^1f_{*}\overline{M}_C^{gp,0}=0_S$.
% guarda la versione 2
This implies that we have isomorphisms of sheaves
\begin{equation}\label{pictaueta}
    \piclogtau_{C/S}=\piclog_{C/S}\times_{R^1 f_{*}\overline{M}_C^{gp}} R^1f_{*}\overline{M}_C^{gp,0}=\pic_{C/S}/K
\end{equation}
Hence $\piclogtau_{C/S}$ is an algebraic space in groups. 
The open immersion $R^1f_{*}\overline{M}_C^{gp,0}\frd R^1f_{*}\overline{M}_C^{gp}$ is \'etale hence from \ref{pictaueta} we have that $\piclogtau_{C/S}\frd \piclog_{C/S}$ is formally \'etale.
%The quotient does not change if we compute the derived functors in the fppf topology (\cite[Appendice 11]{br3} and \cite[A.1]{ollog}).
%In particular the fppf sheaf $\piclog_{C/S}$ is a quotient of fppf sheaves which are representable by algebraic spaces in groups. 
%%As consequence of Artin's representability theorem of flat quotients (\cite[7.3]{aim}) 
%Since flat quotients are representable by algebraic spaces we have that
%$$
%\piclog_{C/S}
%$$
%is representable by an algebraic space in groups. 

Given an artinian thickening $A\frd A_0$ of rings over $S$ with square zero kernel $I$, we have the log exponential sequence (\cite[4.12.1]{olpic})
\begin{equation}\label{expseq}
0\frd \ox_{C_{A_0}}\otimes I\frd M_{C_A}^{gp}\frd M_{C_{A_0}}^{gp}\frd 0
\end{equation}

Using the log and the classical exponential sequence and the fact that 
$$H^2(C_s,\ox_{C_s})=0$$
over fibers $s\in S$ we see that $\pic_{C/S}$, $\piclog_{C/S}$ and $\piclogtau_{C/S}$ are formally smooth over $S$.
Given an artinian thickening
$$
A\frd A_0
$$
over $S$ as before and a sheaf $G$ on $(Sch/S)_{et}$ we have a canonical map
$$
G(Spec(A))\frd G(Spec(A_0))
$$
Assume now that $G$ is a sheaf in groups and define a functor over artinian thickenings with square zero kernel 
$$
A\frd A_0
$$
 over $S$ as
$$
\mathcal{L}ie(G)(A\frd A_0):= \ker(G(Spec(A))\frd G(Spec(A_0))
$$
Given a morphism of group sheaves $f:F\frd G$ we obtain a morphism
$$
\mathcal{L}ie(F)(A\frd A_0)\frd \mathcal{L}ie(G)(A\frd A_0)
$$
\begin{lm}\label{lemmagrfun}
        Let $f:F\rightarrow G$ be a formally \'etale morphism of group sheaves $S$ then for any artinian thickening $A\frd A_0$ over $S$ with square zero kernel the induced map
            $$
            \mathcal{L}ie(F)(A\frd A_0)\frd \mathcal{L}ie(G)(A\frd A_0)
            $$
is an isomorphism.
\end{lm}
\begin{proof}
    This is an easy exercise.

%Given an artinian thickening $A\frd A_0$ over $S$ we have a diagram
%$$
%\xymatrix{
%    {\mathcal{L}ie(F)(A\frd A_0)}\ar[d]\ar[rr]^{l} && {\mathcal{L}ie(G)(A\frd A_0)}\ar[d]\\
%    {F(A)}\ar[rd]^{a}\ar[dd]^{\pi_F}\ar[rr]^{f_A} & & {G(A)}\ar[dd]^{\pi_G}\\
%    & {F(A_0)\times_{G(A_0)} G(A)}\ar[dl]\ar[ru] & \\
%    {F(A_0)} \ar[rr]^{f_{A_0}} & & {G(A_0)} 
%}
%$$
%From \'etaleness the morphism $a$ is bijective and from this follows that also $l$ is bijective. 
%%For $2)$ we need to show that $a$ is bijective. The down vertical arrows are surjective by smoothness. Take $(x,y)\in F(A_0)\times_{G(A_0)} G(A)$. We can lift $x\in F(A_0)$ to a $\tilde{x}\in F(A)$. We have that $\pi_G(f_A(\tilde{x})-y)=0$ hence we can write $f_A(\tilde{x})-y=k_G$ with $k_G\in \mathcal{L}ie(G)(A\frd A_0)$. Since the top horizontal arrow is bijective we find a unique $k_F\in \mathcal{L}ie(F)(A\frd A_0)$ mapping to $k_G$. We have then that $y=f_A(\tilde{x}-k_F)$. Suppose now we have $x_1,x_2\in F(A)$ such that $a(f_1)=a(f_2)$. We then have $f_1-f_2=k_F$ for some $k_F\in \mathcal{L}ie(F)(A\frd A_0)$. Now $l(k_F)=0$ because $f_A(k_F)=f_A(x_1-x_2)=0$. Since $l$ is bijective we have $0=k_F=x_1-x_2$.
\end{proof}
Since $\piclogtau_{C/S} \frd \piclog_{C/S}$ is formally \'etale we have by the previous lemma
$$
\mathcal{L}ie(\piclogtau_{C/S} )=\mathcal{L}ie(\piclog_{C/S})
$$

The same lemma and the fact that we are assuming that $K$ is an \'etale group over $S$ imply
$$
\mathcal{L}ie(\pic_{C/S})=\mathcal{L}ie(\pic_{C/S}/K)
$$
Combining these facts we obtain for any artinian thickening $A\frd A_0$ over $S$ with square zero kernel that the canonical morphism 
\begin{equation}\label{isotangent}
\mathcal{L}ie(\pic_{C/S})(A\frd A_0) \frd \mathcal{L}ie(\piclog_{C/S})(A\frd A_0)
\end{equation}
induced by the sequence \ref{mezzacorta} is an isomorphism.  
Using this sequence and the classical exponential sequence we can read the morphism 
$$\mathcal{L}ie(\pic_{C/S})(A\rightarrow A_0)\frd \mathcal{L}ie( \piclog_{C/S})(A\rightarrow A_0)$$
from the diagram
\begin{equation}\label{tangentdiagram}
\xymatrix{
{H^0(C_{A_0},\gm_{C_{A_0}})}\ar[d]_{a} \ar[r]&  {H^0(C_{A_0},M_{C_{A_0}}^{gp})}\ar[d]^{b}\\
{H^1(C_{A_0},\ox_{C_{A_0}}\otimes I)} \ar[d]\ar[r] &{H^1(C_{A_0},\ox_{C_{A_0}}\otimes I)}\ar[d]\\
 {H^1(C_{A},\gm_{C_{A}})}\ar[d]\ar[r] & {H^1(C_{A},M_{C_{A}}^{gp})}\ar[d] \\
 {H^1(C_{A_0},\gm_{C_{A_0}})} \ar[r]&  {H^1(C_{A_0},M_{C_{A_0}}^{gp})}\\
}
\end{equation}
Namely we have
$$
\mathcal{L}ie( \piclog_{C/S})(A\frd A_0)=H^1(C_{A_0},\ox_{C_{A_0}}\otimes I)/im(b)
$$
and 

$$
\mathcal{L}ie( \pic_{C/S})(A\frd A_0)=H^1(C_{A_0},\ox_{C_{A_0}}\otimes I)/im(a)
$$
In particular isomorphism \ref{isotangent} gives an isomorphism between the module 
$$
H^1(C_{A_0},\ox_{C_{A_0}}\otimes I)/im(a)
$$
 and the module 
$$
H^1(C_{A_0},\ox_{C_{A_0}}\otimes I)/im(b)
$$
 Remember now that by \cite[III, 7.8.6]{ega} or \cite[8.1.8]{blr} the functor 
$$
T\frd \Gamma(C_T,\ox_{C_T})
$$
is represented by a vector bundle $V$ over $S$ if and only if $f:C\frd S$ is cohomologically flat in dimension zero. In this case the subfunctor 
$$
\Gamma(\ox_C^{*}):T\frd \Gamma(C_T,\ox_{C_T}^{*})
$$
is represented by an open subgroup scheme (\cite[8.2.10]{blr}). In particular since our family is (classically) cohomologically flat in dimension zero, the functor $\Gamma(\ox_{C}^{*})$ is smooth because it is an open of a smooth functor. As consequence the morphism $a$ in diagram \ref{tangentdiagram} has to be zero because smoothness implies the absence of obstructions to lift over infinitesimal thickenings. In this way we get an isomorphism 
$$
H^1(C_{A_0},\ox_{C_{A_0}}\otimes I)\stackrel{\cong}{\frd} H^1(C_{A_0},\ox_{C_{A_0}}\otimes I)/im(b)
$$
But this implies that $b$ is the zero map, in particular from the sequence
$$
H^0(C_{A},M_{C_A}^{gp})\frd H^0(C_{A_0},M_{C_{A_0}}^{gp})\stackrel{b}{\frd} H^1(C_{A_0},\ox_{C_{A_0}}\otimes I)
$$
 we obtain log cohomological flatness. 
 
Let us prove that $(2)\Rightarrow (1)$.
Observe that $(2)$ together with theorem \ref{olssonrepres} implies that $\piclogtau_{C/S}$ is an algebraic space over $S$. 
Since kernels of morphisms of algebraic spaces in groups are representable then also $K$ is representable by an algebraic space over $S$.
The morphism induced by $\delta$ 
\begin{equation}\label{appofla}
f_{*}\overline{M}_{C/S}^{gp} \frd K\frd 0
\end{equation}
is unramified. Indeed we have a surjection
$$
\Omega_{f_{*}\overline{M}_{C/S}^{gp}/S}^1\frd\Omega_{f_{*}\overline{M}_{C/S}^{gp}/K}^1\frd 0
$$
and $\Omega_{f_{*}\overline{M}_{C/S}^{gp}/S}^1=0$ by \'etaleness of $f_{*}\overline{M}_{C/S}^{gp}$ over $S$. If we can show that the morphism in \ref{appofla} is flat then $K\frd S$ would be \'etale because \'etaleness descends from flat covers. % (\cite[IV, 17.7.7]{ega}).

Consider the group algebraic space $K^{\prime}=\ker(\delta)$. Since we have an exact sequence
$$
0\frd K'\frd f_{*}\overline{M}_{C/S}^{gp}\frd K\frd 0
$$
and the morphism \ref{appofla} is unramified then also $K'\frd S$ is unramified. To show that the morphism \ref{appofla} is flat it is enough to show that $K'\frd S$ is flat. % by tag039a (criterion of flatness by fibers) since f_{*}\overline{M}_C is flat over S then to show that the morphism f_*\oveline{M}_C-> K is flat it is enough to show that the morphism f_{*}\overline{M}_C_s-> K_s is flat for any s\in S. In particular we can assume that we have group schemes and apply sga 3, VI_B 9.2 vii
We have a surjection of \'etale sheaves 
\begin{equation}\label{surjettop}
f_{*}M_{C/S}^{gp}\frd K'\frd 0
\end{equation}
Observe that $f_{*}M_{C/S}^{gp}$ is representable by an algebraic space over $S$ because $f_{*}\mathbb{G}_m$ and $K'$ are representable and $f_{*}\gm$ is smooth over $S$ (\cite[7.3]{aim}).
By log cohomological flatness we have that $f_{*}M_{C/S}^{gp}$ is formally smooth over $S$.
Since $K'\frd S$ is formally unramified and the morphism in \ref{surjettop} is surjective for the \'etale topology it is easy to see that $K'\frd S$ is also formally \'etale. But then $K'$ is \'etale, hence flat over $S$.
This proves the implication $(2)\Rightarrow (1)$ in theorem \ref{generalrepres}.
\end{proof}
\subsection{The case of aligned curves}
We show now a nice consequence of the alignment condition.
\begin{thm}\label{teorema}
 Let $(S,M_S)$ be a log scheme such that $S$ is regular. Assume that the log structure $M_S$ is trivial on a schematically dense open $U\subset S$. Let 
$$
f:(C,M_C)\frd (S,M_S)
$$
\noindent be a special, aligned and log semistable curve over $(S,M_S)$, strict over $U\subset S$. 
Then the morphism $f:(C,M_C)\frd (S,M_S)$ is log cohomologically flat and the sheaf $\piclogtau_{C/S}$ is representable by an algebraic space smooth over $S$.
\end{thm}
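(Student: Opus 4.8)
The plan is to deduce the whole statement from the characterisation of log cohomological flatness given in Theorem \ref{generalrepres}. That theorem says that once the kernel $K$ of $\pic_{C/S}\frd \piclog_{C/S}$ is known to be representable by an algebraic space étale over $S$, both conclusions follow at once: $f$ is log cohomologically flat, and $\piclogtau_{C/S}$ is representable by a smooth algebraic space. So I would reduce the entire problem to the single assertion that $K$ is étale over $S$, and it is here that alignment and the results of Holmes will be used. (To apply Theorem \ref{generalrepres} literally one also wants $S$ excellent; if $S$ is only assumed regular, one reduces to the excellent case by a standard limit argument, using the limit-preserving property of the sheaves $f_{*}\overline{M}_C^{gp}$ and $R^1f_{*}\overline{M}_C^{gp}$ established in Lemma \ref{mbaralgsp}.)

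First I would identify $K$ geometrically. From the long exact sequence \ref{longexactseq}, the kernel $K$ is precisely the image of the connecting homomorphism $\delta\colon f_{*}\overline{M}_C^{gp}\frd \pic_{C/S}$. Since $f$ is strict over $U$ and $M_S$ is trivial there, the characteristic $\overline{M}_C^{gp}$ vanishes on $C_U$, so $K_U=0$; thus $K$ is a subgroup of $\pic_{C/S}$ that is trivial over the schematically dense open $U$. Using the explicit local description of $\overline{M}_C^{gp}$ through the blow-ups $\nu_c$ and the branches at the connected components of the singular locus (the same description exploited in Lemma \ref{mbaralgsp}), the map $\delta$ sends the standard local generators of $f_{*}\overline{M}_C^{gp}$ to the line bundles attached to the components $Z_c$ of the non-smooth locus, that is, to twists by divisors concentrated on the fibres over $S\setminus U$.

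The crucial step is then the identification $K=E$, where $E=clo(e_U)$ is the schematic closure of the unit section from Definition \ref{closureidentity}. For $K\subseteq E$ one uses that every class in the image of $\delta$ restricts to the trivial bundle over $U$ and comes from a divisor supported over $S\setminus U$, hence lies in the closure of the identity; the model is the classical case of a discrete valuation ring, where the failure of separatedness of $\pic$ is governed exactly by line bundles supported on the special fibre, and the general regular case is handled by working étale-locally on $S$ and invoking the local structure just described. For the reverse inclusion $E\subseteq K$ one checks that the generators of $clo(e_U)$ are precisely these component twists, which become trivial as logarithmic torsors and therefore lie in $K$. I expect this equality to be the main obstacle: it is the bridge between the log-geometric description of $K$ (via $\delta$ and $\overline{M}_C^{gp}$) and the purely schematic object $E$, and making the matching of generators precise over a general regular base, rather than over a discrete valuation ring, is the delicate point.

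Finally, with $K=E$ in hand, alignment enters: by Corollary \ref{Eetale}, equivalently by Theorem \ref{hol:main}(1), the alignment of the underlying family $C\frd S$ forces $E$ to be étale over $S$. Consequently $K$ is representable by an algebraic space étale over $S$, condition (1) of Theorem \ref{generalrepres} is satisfied, and that theorem yields both the log cohomological flatness of $f$ and the representability of $\piclogtau_{C/S}$ by an algebraic space smooth over $S$, which is exactly the desired conclusion.
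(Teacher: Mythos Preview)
Your overall plan---reduce to Theorem \ref{generalrepres} by showing that $K=\operatorname{im}(\delta)$ is \'etale over $S$---is exactly what the paper does. The gap is in your ``crucial step'': you propose to prove $K=E$, but this equality is \emph{false} in general. The paper never claims it; on the contrary, immediately after Theorem \ref{teorema} it works with the quotient $F=E/K$ as a potentially nontrivial \'etale subgroup of $\piclogtau_{C/S}$. Concretely, over a DVR the group $E$ is $\mathcal{D}/\mathcal{D}_0$, generated by the irreducible components of the special fibre, whereas the special log structure has characteristic generated by the \emph{nodes} (connected components of the singular locus), and the induced map $f_{*}\overline{M}_C^{gp}\to\zz^{\sharp\{\text{components of }\tilde{C}_v\}}$ in diagram \ref{deltaolsson} need not be surjective. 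Your argument for $E\subseteq K$ (``generators of $clo(e_U)$ are component twists which become trivial as logarithmic torsors'') implicitly uses the divisorial log structure $M_C^{\dag}$, not the special one $M_C$, and these differ precisely by this discrepancy.

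What the paper actually does is prove only the inclusion $K\subseteq E$, i.e.\ that $\delta$ factors through $E$, and then argue as follows: by Lemma \ref{mbaralgsp} the source $f_{*}\overline{M}_C^{gp}$ is \'etale over $S$, and by alignment (Theorem \ref{hol:main}) the target $E$ is \'etale over $S$; hence the factored map $f_{*}\overline{M}_C^{gp}\to E$ is \'etale, and since it surjects onto $K$, \'etaleness descends along this flat cover to give $K$ \'etale over $S$. For the inclusion $K\subseteq E$ itself, the paper reduces to a pointwise statement at geometric points $s\in S\setminus U$, chooses a DVR $V\to S$ with closed point at $s$ and generic point in $U$, identifies $E_V$ with the closure of the unit section over $V$ (using flatness of $E$), and then checks \ref{equalityclosedpoints} via the explicit description of $\delta$ in diagram \ref{deltaolsson}. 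Your sketch for $K\subseteq E$ is morally right but misses this reduction-to-DVR mechanism, which is what makes the verification tractable.
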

\begin{proof}
Once we have representability the smoothness statement has already been discussed.
    Consider the sheaf $K=im(f_{*}\overline{M}_C^{gp}\frd \pic_{C/S})=\ker(\pic_{C/S}\frd \piclog_{C/S})$. Since $f_{*}\overline{M}_C^{gp}$ and $\pic_{C/S}$ are representable then $K$ is an algebraic space over $S$. 
\begin{lm}\label{imagedelta}
\noindent Under the hypothesis of the theorem the sheaf $K$ is an algebraic space \'etale over $S$. 
\end{lm}
\begin{proof}
%As already remarked it is enough to show that $K\frd S$ is flat.
%We can assume that $S=Spec(A)$ is local.
The space $f_{*}\overline{M}_{C}^{gp}$ is \'etale over $S$ by lemma \ref{mbaralgsp} and in particular $f_{*}\overline{M}_C^{gp}$ is reduced.
From this follows that the image of the morphism 
$$
\delta:f_{*}\overline{M}_C^{gp}\frd \pic_{C/S}
$$
is also reduced. This morphism is also unramified by \'etaleness of $f_{*}\overline{M}_C^{gp}$ over $S$.
%In particular $K$ is reduced. %TAG 0356
By the alignment hypothesis we have that $E$ is \'etale over $S$ (theorem \ref{hol:main}).
Over the open $U\subset S$ the log structure is trivial hence there is no difference between $M_{C_U}^{gp}$ and $\mathbb{G}_{m,C_U}$. 
In this way we get isomorphisms
 $$
 f_{*}\overline{M}_{C}^{gp}|_U\stackrel{\cong}{\frd}K|_U \stackrel{\cong}{\frd}E|_U\cong e_U
 $$
and 
$$
\pic_{C/S}|_U\cong \piclog_{C/S}|_U
$$
It is enough to prove that we have a factorization
$$
\xymatrix{
    {f_{*}\overline{M}_{C/S}^{gp}}\ar[rr]^{\delta}\ar@{.>}[dr] && {\pic_{C/S}}\\
    & {E }\ar[ur]
}
$$
for the boundary morphism $\delta$. 
Indeed assuming this then by alignment $E$ is \'etale over $S$ but then the morphism $$
f_{*}\overline{M}_{C/S}^{gp}\rightarrow E
$$
is also \'etale. 
Since \'etaleness descends from flat coverings we must have that also $K$ is \'etale over $S$.
To show the factorization it is enough to show that for any $s\in S$ we have such factorization.
Since algebraic spaces in groups over fields are representable by group schemes we have that $f_{*}\overline{M}_{C_s}^{gp}$ and $\pic_{C_s/s}$ are group schemes locally of finite type over $s$. 
We can even assume that $s$ is a geometric point. Then by very density of $s$-points and the fact that $K$ and $E$ are reduced it is enough to prove that set theoretically we have 

\begin{equation}\label{equalityclosedpoints}
    \delta_s( f_{*}\overline{M}_{C_s}^{gp})(s)\subset E_s(s)
\end{equation}
%Let us suppose this equality for the moment and let us see that this implies existence and surjectivity of the dotted arrow.
%Let $G_s^0$ be the connected component of the identity of $f_{*}\overline{M}_{C_s}^{gp}$.
%This is a group scheme of finite type over $s$ hence the morphism $\delta_s|_{G_s^0}:G_s^0\frd \pic_{C_s/s}$ is a quasi-compact morphism. 
%In particular the set $\delta_s|_{G_s^0}(G_s^0)$ is closed in $\pic_{C_s/s}$ by \cite[3, Exp. $VI_B$, 1.3.2]{sga}. 
%Reducedness, equality \ref{equalityclosedpoints} and very density of $s$-points (\cite[IV, 10.4.8]{ega}) imply that $\delta_s|_{G_s^0}(G_s^0)=E_s^0$. 
%From this and \cite[3, Exp. $VI_B$, 1.3.2]{sga} follow that the actually have a factorization
%$$
%\xymatrix{
%    {f_{*}\overline{M}_{C_s}^{gp}}\ar@{.>}[dr] \ar[rr]^{\delta_s} && {\pic_{C_s/s}}\\
%    & {E_s}\ar[ru]
%}
%$$
% and that the dotted arrow is universally open. By very density of $s$-points and equality \ref{equalityclosedpoints} we must have that $\delta_s$ is surjective.

%Let us prove equality \ref{equalityclosedpoints}. 
We have already seen that this is true at fibers over points $s\in U$. Take a geometric point $s\in S\setminus U$.

Let $V=Spec(R)$ with $R$ a strictly henselian discrete valuation ring with algebraically closed residue field (isomorphic to the residue field of $s$) and  $V\frd S$ be a morphism mapping the closed point $v\in V$ to $s$ and the generic point $\eta$ to $U$. 
Since $E$ is \'etale then also the base change $E_V\frd V$ is \'etale. 
The morphism $E\frd \pic_{C/S}$ is a closed immersion hence the base change morphism $E_V\frd \pic_{C_V/V}$ is a closed immersion too.
%(for example TAG 01JY in Stacks Project).
Since the preimage of $U$ in $V$ is schematically dense in $V$ and $E_V$ is flat over $V$ we have that the preimage of the generic point $\eta$ is schematically dense in $E_V$ . In particular $E_V$ is isomorphic the closure of the unit section of $\pic_{C_\eta/\eta}$ in $\pic_{C_V}$. 

Let now $\mathcal{D}$ be the group of Cartier divisors on $C_{V}$ whose support is concentrated in the special fiber of $C_{V}$ and $\mathcal{D}_0$ be the subgroup of principal divisors.

By \cite[6.1.3]{ray} and \cite[6.4.1]{ray} over discrete valuation rings there are identifications
\begin{equation}\label{closunitdvr}
    E_{V}(v)= E_{V}(V)= \mathcal{D}/\mathcal{D}_0
\end{equation}
At this point the inclusion \ref{equalityclosedpoints} follows by an easy calculation with \v{C}ech cohomology or by the characterization of the morphism $\delta$ given in \cite[3.2, 3.3]{olpic}.
Let us review this. We consider two different log structures: the special on $(C,M_C)\frd (V,M_V)$ and the divisorial one $(C,M_C^{\dag})/(V,M_V^{\dag})$ induced from the special fiber.
Similar as in \cite[3.2, 3.3]{olpic} and in section 1.1 of this paper we have a commutative diagram
$$
\xymatrix{
    {0}\ar[r] & {\ox_C^{\times}}\ar[r]\ar[d]^{=} & {M_C^{gp}}\ar[r]\ar[d] & {\oplus \nu_{c,*}\zz}\ar[r]\ar[d] & {0}\\
    {0}\ar[r] & {\ox_C^{\times}}\ar[r] & {M_C^{\dag, gp}}\ar[r] & {\nu_{*}\zz}\ar[r] & {0}\\
}
$$
where $\nu_c:\tilde{C}_{c,v}\frd C_v$ is the partial normalization of the special fiber $C_v$ at the node $c$ and $\nu:\tilde{C}_v\frd C_v$ is the normalization of the special fiber.
Denote with $\underline{\mbox{Pic}}_{C_V/V}^{\dag,log}$ the log Picard functor constructed using the $M_{C_V}^{\dag}$ log structure.
The previous diagram gives us 

\begin{equation}\label{grandediagramma}
\xymatrix{
    &        &                              & {\piclogtau_{C_V/V}}\ar[d]^{i} & &\\ 
    \dots \ar[r]&   {f_{*}\overline{M_{C_V}^{gp}}}\ar[r]^{\delta_V}\ar[d] & {\pic_{C_V/V}}\ar[r]\ar[d]^{=}\ar[ur] & {\piclog_{C_V/V}}\ar[r]\ar[d]^{p} & {R^1f_{*}\overline{M}_{C_V}^{gp}}\ar[r]\ar[d] & \dots \\
    \dots \ar[r]&   {f_{*}\overline{M}_{C_V}^{\dag,gp}}\ar[r]^{\delta^{\dag}} & {\pic_{C_V/V}}\ar[r] & {\underline{\mbox{Pic}}_{C_V/V}^{\dag,log}}\ar[r] & {R^1f_{*}\nu_{*}\zz \stackrel{(*)}{=}0}  &
}
\end{equation}
Where (*) follows from \cite[4, IX 3.6]{sga}.
The morphism $\delta_V$ has then the following factorization

\begin{equation}\label{deltaolsson}
    \xymatrix{
        {f_{*}\overline{M}_{C_V}^{gp}}\ar[d]\ar[rr]^{\delta_V} && {\pic_{C_V/V}}\ar[d]^{\cong}\\
        {\mathbb{Z}^{\sharp\{\mbox{connected components of }\tilde{C}_v \}}}\ar@{.>>}[dr]\ar[rr]^{\delta^{\dag}}&& {\pic_{C_V/V}}\\
        &{ \mathcal{D}/\mathcal{D}_0=E_V(v)}\ar[ur] &
    }
\end{equation}
where the middle horizontal morphism sends an irreducible component $Z$ of $\tilde{C}_v$ to a line bundle $\mathcal{L}_{Z}$ on $C_v$ having the following properties. Let $\bar{Z}$ be the image of $Z$ in $C_v$. For every irreducible component $Y$ of $C_v$ different from $\bar{Z}$ we have
$$
\mathcal{L}_{Z}|_{Y}\cong \ox_{Y}(-D_{Y,Z})
$$
\noindent where $D_{Y,Z}=\bar{Z}\cap Y$ and
$$
\mathcal{L}_{Z}|_{\bar{Z}}=\ox_{\bar{Z}}(\sum_{Y\neq \bar{Z}} D_{Y,Z})
$$
This together with \ref{closunitdvr} shows inclusion \ref{equalityclosedpoints} and the proof of the lemma is complete.

\end{proof}

The lemma and theorem \ref{generalrepres} complete the proof of \ref{teorema}. 
\end{proof}

The proof of lemma \ref{imagedelta} shows that under the hypothesis of theorem \ref{teorema} the whole sheaf $\piclog_{C/S}$ is slightly more separated than $\pic_{C/S}$.
Suppose that the curve is aligned so that $E$ is \'etale over $S$. By log cohomological flatness we have that the morphism
$$
\pic_{C/S}\frd\piclogtau_{C/S}
$$
is \'etale. 
The quotient $F=E/K$ is then representable by an algebraic space and this is a subgroup of $\piclogtau_{C/S}$ \'etale over $S$.
In particular the sheaf $\piclogtau_{C/S}/F$ is representable by an algebraic space which is smooth over $S$.
Since we have already seen that $\piclogtau_{C/S}=\pic_{C/S}/K$ we have
$$
\piclogtau_{C/S}/F=(\pic_{C/S}/K)/(E/K)=\pic_{C/S}/E
$$
This implies that $\piclogtau_{C/S}/F$ is a separated algebraic space over $S$. 
From this follows that $F$ is a closed subgroup of $\piclog_{C/S}$ and it is \'etale over $S$. Let $t:F\frd S$ be the structure morphism. Since $U$ is schematically dense in $S$ and $F/S$ is \'etale we have that $t^{-1}U=e_U$ is schematically dense in $F$. It follows that $F$ is the closure in $\piclogtau_{C/S}$ of the unit section $e_U\in \piclogtau_{C_U/U}=\pic_{C_U/U}$.
This implies that 
$$
\piclogtau_{C/S}/F
$$
is the maximal separated quotient of $\piclogtau_{C/S}$.
Because of this the next definition is well posed. 
\begin{definition}\label{maxsepquot}
    Let $f:(C,M_C)\frd (S,M_S)$ be a family of curves as in theorem \ref{teorema}. 
    \begin{enumerate}
        \item Define $Q^{log}$ as the maximal separated quotient of $\piclogtau_{C/S}$.
        \item Suppose that $S$ is the spectrum of a discrete valuation ring. Let $C/S$ be a proper family of nodal curves with smooth generic fiber. Define 
            $$
            f^{\dag}:(C,M_C^{\dag})\frd (S,M_S^{\dag})
            $$
            be the morphism of log schemes such that the log structure is the divisorial one induced by the special fiber. In this situation define $\piclogdag_{C/S}$ to be the \'etale sheaf whose section over a scheme $T/S$ are given by
            $$
            \piclogdag_{C/S}(T)=H_{et}^0(T,R^1f_{T,*}^{\dag}M_{C_T}^{\dag, gp})
            $$
    \end{enumerate}
\end{definition}
\begin{thm}\label{teoremamain}
Let $f:(C,M_C)\frd (S,M_S)$ as in theorem \ref{teorema}. Then
\begin{enumerate}
    \item the maximal separated quotient $Q^{log}$ of $\piclogtau_{C/S}$ exists and if $C$ is regular then it is a N\'eron model for $\pic_{C_U/U}$;
    \item the algebraic space $Q^{log}$ has the property that for any discrete valuation ring $V$ and morphism $T=Spec(V)\frd S$ mapping the generic point to the open $U$ there is a canonical isomorphism of group functors

$$
Q_T^{log}\cong \piclogdag_{C_T/T}
$$
where $Q_T^{log}=Q^{log}\times_S T$.
\end{enumerate}

\end{thm}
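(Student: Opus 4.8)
The plan is to dispatch part (1) using the material assembled immediately before Definition \ref{maxsepquot}, and to prove part (2) by base changing the whole situation to the discrete valuation ring $T$ and then comparing the special and divisorial log structures by means of diagram \ref{grandediagramma}.

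For part (1): Theorem \ref{teorema} gives that $\piclogtau_{C/S}$ is a smooth algebraic space, and the discussion preceding Definition \ref{maxsepquot} already exhibits $F=E/K$ as the closure of the unit section in $\piclogtau_{C/S}$ and identifies the maximal separated quotient as
$$
Q^{log}=\piclogtau_{C/S}/F=\pic_{C/S}/E.
$$
In particular $Q^{log}$ exists. When $C$ is regular, Corollary \ref{Eetale} identifies $\pic_{C/S}/E$ with the N\'eron model of $\pic_{C_U/U}$, so the first assertion follows with no extra work.

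For part (2): I would first base change along $T=Spec(V)\frd S$. By alignment the group $E$ is \'etale, hence flat, over $S$ (Theorem \ref{hol:main}); therefore $E_T$ is flat over $T$ with generic fibre the unit section, and by flatness $E_T$ is the schematic closure of the unit section of $\pic_{C_\eta/\eta}$ inside $\pic_{C_T/T}=(\pic_{C/S})_T$. Thus $E_T$ coincides with the closure $E$ attached to the curve $C_T/T$. Since the quotient sheaf by a flat \'etale group commutes with base change,
$$
Q^{log}_T=(\pic_{C/S}/E)_T=\pic_{C_T/T}/E_T.
$$
It remains to identify $\piclogdag_{C_T/T}$ with this quotient. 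Over the discrete valuation ring $T$ alignment is automatic (Remark \ref{alignmentcod2}), so diagram \ref{grandediagramma} applies to $C_T/T$ with the divisorial log structure $M^{\dag}$. Its bottom row, together with the vanishing $R^1f_{T,*}\overline{M}_{C_T}^{\dag,gp}=R^1f_{T,*}\nu_*\zz=0$ from \cite[4, IX 3.6]{sga}, yields the exact sequence
$$
f_{T,*}\overline{M}_{C_T}^{\dag,gp}\stackrel{\delta^{\dag}}{\frd}\pic_{C_T/T}\frd\piclogdag_{C_T/T}\frd 0,
$$
so that $\piclogdag_{C_T/T}=\pic_{C_T/T}/im(\delta^{\dag})$. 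The factorization \ref{deltaolsson} of $\delta^{\dag}$ through $E_T$, combined with the identification $E_T(v)=\mathcal{D}/\mathcal{D}_0$ of \ref{closunitdvr}, shows that the sheaf-theoretic image of $\delta^{\dag}$ is exactly the reduced \'etale subgroup $E_T$. Hence
$$
\piclogdag_{C_T/T}=\pic_{C_T/T}/E_T=Q^{log}_T,
$$
and the vertical comparison map $p$ of \ref{grandediagramma} induces the asserted canonical isomorphism of group functors.

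The step I expect to be the main obstacle is the base change in part (2). The quotient $\pic_{C/S}/E$ is a N\'eron model, and N\'eron models do not commute with base change in general, so one must argue that in this situation $(\pic_{C/S}/E)_T$ genuinely equals $\pic_{C_T/T}/E_T$. This is precisely where flatness of $E$, i.e.\ the alignment hypothesis, is indispensable: it guarantees both that $E_T$ recomputes the closure of the unit section over $T$ and that forming the quotient is compatible with the base change. A secondary technical point is to upgrade the equality $im(\delta^{\dag})=E_T$ from the level of points to \'etale sheaves; this reduces, after passing to the strict henselization of $V$ where \'etale sheaves are determined by their sections, to the computation already carried out in the proof of Lemma \ref{imagedelta}.
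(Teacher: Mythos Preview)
Your proposal is correct and follows essentially the same route as the paper. For part (1) you invoke exactly the same ingredients (the discussion before Definition \ref{maxsepquot} and Corollary \ref{Eetale}); for part (2) the paper phrases the argument via the surjectivity of $p\circ i$ in diagram \ref{grandediagramma} together with base change compatibility of $\piclogtau_{C/S}$, while you phrase it via base change of the quotient $\pic_{C/S}/E$ --- but since $\piclogtau_{C/S}=\pic_{C/S}/K$ and $Q^{log}=\pic_{C/S}/E$, these are the same computation, and both hinge on the \'etaleness of $E$ to identify $E_T$ with the closure of the unit section over $T$ and on diagram \ref{deltaolsson} to match $im(\delta^{\dag})$ with $E_T$.
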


\begin{proof}
    Existence in part (1) has already been discussed before definition \ref{maxsepquot}. Corollary \ref{Eetale} implies that if $C$ is regular then $Q^{log}$ is a N\'eron model of $\pic_{C_U/U}$.\\
    Part (2) follows from diagram \ref{deltaolsson} and from the surjectivity of the composition $p\circ i$ in diagram \ref{grandediagramma} once we observe that the formation of $\piclogtau_{C/S}$ commutes with base change and that \'etaleness of $E$ implies that the pull back of $E$ to $T$ is isomorphic to the closure of the unit section of $\pic_{C_{\eta}/\eta}$ in $\pic_{C_T/T}$.
\end{proof}

\begin{cor}\label{functnerodvr}
Given a proper nodal curve over the spectrum of a discrete valuation ring $T$ with smooth generic fiber $C_{\eta}$ then the sheaf $\piclogdag_{C/T}$ is representable by an algebraic group which is smooth and separated over $T$. If $C$ is regular then $\piclogdag_{C/T}$ is isomorphic to the classical N\'eron model of $\pic_{C_{\eta}/\eta}$.
\end{cor}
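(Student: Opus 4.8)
The plan is to deduce the corollary from Theorem~\ref{teoremamain} by specializing the regular base $S$ to the given $T=Spec(V)$ and taking the test morphism to be the identity. First I would check that the hypotheses of Theorem~\ref{teorema} hold. Endow $T$ with the log structure associated to its closed point, which is trivial on the schematically dense open $U=\eta$, and endow $C$ with a special log structure $M_C$ for which $f:(C,M_C)\frd(T,M_S)$ is a log semistable curve, strict over $U$. Such a special log structure exists and is unique (up to isomorphism) by remark~\ref{makespecial}: since the generic fiber $C_\eta$ is smooth and the special fiber is nodal, the geometric fibers are semistable, so starting from any log structure making $f$ log smooth one obtains the canonical special model without altering the underlying schemes. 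Finally, over a discrete valuation ring alignment is automatic by remark~\ref{alignmentcod2}. Thus Theorem~\ref{teorema}, and hence Theorem~\ref{teoremamain}, applies to $(C,M_C)\frd(T,M_S)$.

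Granting this, the discussion preceding definition~\ref{maxsepquot} shows that the maximal separated quotient $Q^{log}$ of $\piclogtau_{C/T}$ exists and is a smooth, separated group algebraic space over $T$. Applying Theorem~\ref{teoremamain}(2) with $S=T$ and the identity morphism $T\frd T$, which sends the generic point into $U=\eta$, produces a canonical isomorphism of group functors $Q^{log}\cong\piclogdag_{C/T}$. This immediately gives that $\piclogdag_{C/T}$ is representable by a smooth and separated group algebraic space over $T$. To upgrade this to an \emph{algebraic group} in the sense of a group scheme, I would invoke the fact that a smooth, separated group algebraic space over the spectrum of a discrete valuation ring is automatically a scheme.

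For the final assertion, suppose $C$ is regular. Then Theorem~\ref{teoremamain}(1), through corollary~\ref{Eetale}, exhibits $Q^{log}$ as a N\'eron model of $\pic_{C_\eta/\eta}$ in the sense of definition~\ref{neronalgebr}. Over a discrete valuation ring an object satisfying the mapping property of definition~\ref{neronalgebr}, whose test objects are smooth $T$-algebraic spaces, in particular satisfies the classical N\'eron mapping property with respect to smooth $T$-schemes; by the uniqueness of the classical N\'eron model this forces $Q^{log}\cong\piclogdag_{C/T}$ to be the classical N\'eron model of $\pic_{C_\eta/\eta}$.

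I expect the only genuinely delicate point to be the interplay between the two log structures on $C$. The functor $\piclogdag_{C/T}$ is built from the divisorial log structure $M_C^{\dag}$ of the special fiber, while $Q^{log}$ is built from the special (Olsson) log structure $M_C$, and these two do not agree over a discrete valuation ring. Their identification is exactly the content of Theorem~\ref{teoremamain}(2), established through the comparison diagram~\ref{grandediagramma} and the factorization~\ref{deltaolsson}. Once this bridge is in place, the remaining steps are routine checks of the hypotheses of the main theorem together with standard facts about N\'eron models and group algebraic spaces over a discrete valuation ring.
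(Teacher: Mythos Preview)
Your proposal is correct and follows essentially the same route as the paper's proof: note that alignment is automatic over a discrete valuation ring, apply parts (1) and (2) of Theorem~\ref{teoremamain} with $S=T$, and then upgrade the resulting smooth separated group algebraic space to a group scheme. The paper's only addition is an explicit reference for this last step, citing \cite{anan}, th\'eor\`eme 4.B, for the fact that a smooth separated group algebraic space over a one-dimensional base is a scheme; your careful verification of the hypotheses via remark~\ref{makespecial} and the discussion of the two log structures are sound elaborations rather than departures.
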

\begin{proof}

A curve as in the statement is automatically aligned. The corollary follows then from part (1) and (2) of the previous theorem and from \cite{anan} th\'eor\`eme 4.B.
\end{proof}
We give now a partial converse of theorem \ref{teorema} by showing that a priori alignment is a stronger condition than log cohomological flatness.
\begin{cor}
    Let $(S,M_S)$ be a log scheme with $S$ regular. Let $f:(C,M_C)\frd (S,M_S)$ be a special, log semistable curve. Suppose there exists a schematically open dense $U\subset S$ such that the morphism is strict over $U$ and $M_U$ is trivial. Suppose that the family is log cohomologically flat, that $Q^{log}$ exists and that the quotient morphism $q:\piclogtau_{C/S}\frd Q^{log}$ is flat then the family of curves $C\frd S$ is aligned and $Q^{log}$ is smooth over $S$. 
\end{cor}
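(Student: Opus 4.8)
The plan is to deduce from flatness of $q$ that the schematic closure $E=clo(e_U)$ of the unit section in $\pic_{C/S}$ is flat over $S$, and then to read off alignment from theorem \ref{hol:main}; smoothness of $Q^{log}$ will be a formal consequence. First I would record what log cohomological flatness already supplies. By theorem \ref{generalrepres} the kernel $K=\ker(\pic_{C/S}\frd\piclog_{C/S})$ is an algebraic space \'etale over $S$, the sheaf $\piclogtau_{C/S}$ is representable by an algebraic space smooth over $S$ with $\piclogtau_{C/S}=\pic_{C/S}/K$, and in particular the quotient morphism $\pi:\pic_{C/S}\frd\piclogtau_{C/S}$ is \'etale. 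Since $Q^{log}$ is by definition the maximal separated quotient of $\piclogtau_{C/S}$, its kernel is the schematic closure $\bar e$ of the unit section $e_U\in\piclogtau_{C_U/U}=\pic_{C_U/U}$.

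The heart of the argument is to transfer flatness across $\pi$. Writing $\ker q=\piclogtau_{C/S}\times_{Q^{log}}S$ exhibits $\bar e\frd S$ as the base change of the flat morphism $q$ along the unit section of $Q^{log}$, so $\bar e$ is flat over $S$. Setting $G:=\ker(\pic_{C/S}\frd Q^{log})=\pi^{-1}(\bar e)$, the morphism $G\frd\bar e$ is the base change of the \'etale morphism $\pi$, hence \'etale, and therefore $G\frd S$ is flat. On the other hand $G$ is the kernel of a homomorphism to the separated algebraic space $Q^{log}$, so it is closed in $\pic_{C/S}$; and since both $M_S$ and $K$ are trivial over $U$ we have $G|_U=e_U$. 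As $U$ is schematically dense in $S$ and $G$ is flat over $S$, the open $G|_U$ is schematically dense in $G$ (the same density principle used for $E_V$ in the proof of lemma \ref{imagedelta}). Consequently $G$ is the schematic closure of $e_U$, i.e. $G=E$, and in particular $E$ is flat over $S$.

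With $E=clo(e_U)$ now known to be flat over $S$, theorem \ref{hol:main} gives the equivalence of flatness of $clo(e_U)$ with alignment of $C\frd S$, which proves the first assertion. For the second, alignment places us in the setting of theorem \ref{teorema} and of the discussion preceding definition \ref{maxsepquot}: the space $E$ is then \'etale over $S$, so $F=E/K$ is an \'etale subgroup, $\piclogtau_{C/S}$ is smooth over $S$, and $Q^{log}=\piclogtau_{C/S}/F$ is a quotient of a smooth algebraic space by an \'etale subgroup, hence smooth over $S$. (Equivalently, $\piclogtau_{C/S}$ is already smooth over $S$ by theorem \ref{generalrepres} and $q$ is faithfully flat and locally of finite presentation, so smoothness of $Q^{log}$ descends along $q$.)

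I expect the main obstacle to be the identification $G=E$ in the second paragraph. One must be careful that flatness of $\ker q$ is genuinely forced by flatness of $q$, that it survives pullback along the \'etale cover $\pi$, and that schematic density of the $U$-fiber in any $S$-flat algebraic space then prevents $G$ from being strictly larger than the closure of the unit section; it is precisely this combination that rules out the extra components an \'etale preimage could in principle contribute. Once $E$ is seen to be flat, everything else is an application of the stated theorems.
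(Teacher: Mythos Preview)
Your proof is correct and follows essentially the same route as the paper. Both arguments use log cohomological flatness via theorem \ref{generalrepres} to get that $\pi:\pic_{C/S}\frd\piclogtau_{C/S}$ is \'etale, then show that the kernel $G=\ker(\pic_{C/S}\frd Q^{log})$ is flat over $S$, closed in $\pic_{C/S}$, and restricts to $e_U$ over $U$, whence $G=E$ by schematic density; alignment and smoothness then follow from theorem \ref{hol:main}. The only cosmetic difference is that the paper argues flatness of $G$ directly from the composition $p=q\circ\pi$ being flat, whereas you first base change $q$ to obtain flatness of $\ker q$ and then pull back along $\pi$; your identification of $\ker q$ with the closure $\bar e$ of the unit section is correct but not actually needed for the argument to go through.
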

\begin{proof}
By log cohomological flatness $\piclogtau_{C/S}$ is a smooth algebraic space over $S$ and the morphism $\pic_{C/S}\frd \piclogtau_{C/S}$ is \'etale.
Consider the composition $p:\pic_{C/S}\frd \piclogtau_{C/S}\frd Q^{log}$. 
Since $q$ is flat by assumption this implies that $p$ is flat.
Let $F=\ker(p)$. 
This is an algebraic space flat over $S$ and the inclusion $F\frd \pic_{C/S}$ is a closed immersion because $Q^{log}$ is separated over $S$. Since $F|_U$ is the unit section we must have 
$$
E\subset F
$$
Let $t:F\frd S$ be the structure morphism. Since this is flat then the open $E|_U=e_U=F|_U=t^{-1}U$ is schematically dense in $F$. This implies that $E=F$.
Since now $E$ is flat over $S$ we can use theorem \ref{hol:main} to conclude that the curve $C/S$ is aligned and that $F$ is \'etale over $S$. 
Since $\pic_{C/S}$ is smooth and $Q^{log}=\pic_{C/S}/F$ then $Q^{log}$ is smooth over $S$.
\end{proof}
We conclude by giving an easy consequence of theorem \ref{teorema}.
\begin{cor}
Let $(S,M_S)$ be a log scheme such that $S$ is excellent and regular in codimension 1 and that the log structure is trivial on a schematically dense, regular open subscheme $U\subset S$. 
Let $$f:(C,M_C)\frd (S,M_S)$$ be a special, log semistable curve, whose underlying scheme map is smooth over $U$, and such that the log structure on $C$ is trivial on $f^{-1}U$. There exists an open $U\subset V\subset S$, whose complementary is of codimension at least 2 in $S$, such that
$$
\piclogtau_{C_V/V}
$$
\noindent is representable by a smooth algebraic space in groups.
\end{cor}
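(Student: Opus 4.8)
The plan is to produce the open $V$ by excising two closed subsets of codimension at least two and then to invoke theorem \ref{teorema} directly. First I would pass to the regular locus $S_{\mathrm{reg}}$ of $S$. Since $S$ is excellent, $S_{\mathrm{reg}}$ is open, and since $S$ is regular in codimension one the closed complement $S\setminus S_{\mathrm{reg}}$ has codimension at least two in $S$. As $U$ is regular we have $U\subseteq S_{\mathrm{reg}}$, and $U$ remains schematically dense in every open of $S$ containing it. Over $S_{\mathrm{reg}}$ the base is regular, $U$ is schematically dense and $C$ is smooth over $U$, so all the hypotheses of Holmes' theorem \ref{hol:main} hold there, and I may use the equivalence between alignment and flatness of $E=clo(e_U)$.

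Next I would locate and remove the non-aligned locus. The algebraic space $E$ is a closed subspace of $\pic_{C_{S_{\mathrm{reg}}}/S_{\mathrm{reg}}}$, hence locally of finite presentation over the locally Noetherian scheme $S_{\mathrm{reg}}$; reading alignment through the flatness of $E$ via theorem \ref{hol:main}, and granting that the resulting locus is open (the non-flat locus being closed), I call $W\subseteq S_{\mathrm{reg}}$ the open locus of points at which $C$ is aligned and $Z$ its closed complement. The crucial observation is that $Z$ meets no point of codimension at most one. Indeed, a codimension zero point of $S_{\mathrm{reg}}$ is a generic point, which lies in $U$, where the curve is smooth and hence trivially aligned; and at a codimension one point the local ring is a discrete valuation ring, because $S$ is regular in codimension one, so the curve there is a generically smooth nodal curve over a discrete valuation ring, and alignment is automatic by remark \ref{alignmentcod2}. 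Since $Z$ is closed and avoids every point of codimension $\le 1$, it has codimension at least two in $S_{\mathrm{reg}}$, and therefore in $S$, because $S_{\mathrm{reg}}$ is open and codimension is preserved under the open immersion.

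I would then set $V=W=S_{\mathrm{reg}}\setminus Z$. This is an open subscheme of $S$ with $U\subseteq V$, it is regular and excellent, and its complement $S\setminus V=(S\setminus S_{\mathrm{reg}})\cup Z$ is a union of two closed subsets of codimension at least two, hence itself of codimension at least two in $S$. By construction $U$ is schematically dense in $V$, the log structure $M_V$ is trivial on $U$, and the restricted morphism $f_V:(C_V,M_{C_V})\frd (V,M_V)$ is special, log semistable, strict over $U$ and aligned. Thus every hypothesis of theorem \ref{teorema} is satisfied over $V$, and that theorem yields that $\piclogtau_{C_V/V}$ is representable by an algebraic space in groups smooth over $V$, which is exactly the assertion.

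The main obstacle is the middle paragraph: showing that the non-aligned locus $Z$ is closed of codimension at least two. Closedness rests on identifying the aligned locus with the flat locus of $E$ through theorem \ref{hol:main} and on the openness of that locus, while the codimension bound is the reduction to codimension one furnished by remark \ref{alignmentcod2}. Once $Z$ is in hand, the remaining verifications (openness and regularity of $V$, persistence of schematic density, and of speciality, log semistability and strictness under restriction to the open $V$) are entirely formal.
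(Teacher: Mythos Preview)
Your proposal is correct and follows the same two-step shrinking argument as the paper: first pass to the regular locus (open with codimension $\ge 2$ complement since $S$ is excellent and regular in codimension one), then remove the non-aligned locus (again codimension $\ge 2$, since alignment is automatic at points of codimension $\le 1$ by remark \ref{alignmentcod2}), and finally invoke theorem \ref{teorema}. The paper's proof is essentially a terse version of what you wrote; your additional justification for the openness of the aligned locus via the flatness criterion of theorem \ref{hol:main} is a reasonable elaboration of something the paper leaves implicit, though strictly speaking the non-flat locus is closed in $E$ rather than in $S$, so a cleaner route is to note that alignment is constructible and stable under generization, hence open.
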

\begin{proof}
Since $S$ is excellent and it has a non empty open regular subscheme, the regular locus is a non empty open in $S$ and it has complementary of codimension at least 2 (by regularity in codimension one).
In particular after shrinking we can assume that $S$ is regular. By the regularity hypothesis on $U$ we still have $U\subset S$.
%Furthermore after removing closed subsets of codimension 2 we can assume that the complementary of $U$ is strictly normal crossing.\\ 
Furthermore outside codimension at least 2 the alignment condition is always satisfied in our situation. Hence shrinking again we can assume that $C\frd S$ is aligned.
Using theorem \ref{teorema} we have that $\piclogtau_{C/S}$ is representable.
\end{proof}

\end{document}